\newtheorem{theorem}{Theorem}[section]
\newtheorem{corollary}[theorem]{Corollary}
\newtheorem{lemma}[theorem]{Lemma}
\newtheorem{proposition}[theorem]{Proposition}
\theoremstyle{definition}
\newtheorem{definition}[theorem]{Definition}
\newtheorem{remark}[theorem]{Remark}
\newtheorem{example}[theorem]{Example}
\theoremstyle{parrafo}
\begin{document}

\title[]{Generalized Bernstein operators on the classical polynomial
spaces}

\author{J. M. Aldaz and H. Render}
\address{Instituto de Ciencias Matem\'aticas (CSIC-UAM-UC3M-UCM) and Departamento de 
	Matem\'aticas,
	Universidad  Aut\'onoma de Madrid, Cantoblanco 28049, Madrid, Spain.}
\email{jesus.munarriz@uam.es}
\email{jesus.munarriz@icmat.es}
\address{H. Render: School of Mathematical Sciences, University College
	Dublin, Dublin 4, Ireland.}
\email{hermann.render@ucd.ie}

\thanks{2010 Mathematics Subject Classification: \emph{Primary: 41A10}}
\thanks{Key words and phrases: \emph{Bernstein polynomial, Bernstein operator.}}

\thanks{The first named author was partially supported by Grant MTM2015-65792-P of the
	MINECO of Spain, and also by by ICMAT Severo Ochoa project SEV-2015-0554 (MINECO)}

\begin{abstract} 
We study generalizations of the classical Bernstein operators
on the polynomial spaces  $\mathbb{P}_{n}[a,b]$, where
instead of fixing $\mathbf{1}$ and $x$, we reproduce exactly $\mathbf{1}$ and 
a polynomial $f_1$,  strictly increasing on $[a,b]$. 
 We prove
that for sufficiently large $n$,
there always exist generalized Bernstein operators fixing  $\mathbf{1}$ and $f_1$.
These operators are defined by non-decreasing sequences of nodes precisely when 
$f_1^\prime > 0$ on $(a,b)$, but even
if $f_1^\prime$ vanishes somewhere inside $(a,b)$, they converge to the identity.
\end{abstract}

\maketitle

\markboth{J. M. Aldaz, H. Render}{Generalized Bernstein Operators}

\section{Introduction} 

Let $\mathbb{P}_{n}[a,b]$ denote the space of polynomials of degree
bounded by $n$, over the interval $[a,b]$.  
The classical Bernstein operator 
$B_{n}:C
\left[ a,b\right] \rightarrow \mathbb{P}_{n}[a,b]$, defined by 
\begin{equation}
B_{n}f\left( x\right)
 =
 \sum_{k=0}^{n}f\left( a+\frac{k}{n}\left( b-a\right)
\right) \binom{n}{k}\frac{\left( x-a\right) ^{k}\left(
	b-x\right) ^{n-k}}{\left( b-a\right) ^{n}},  \label{defBPr}
\end{equation}
reproduces exactly (or fixes) the affine functions, but from the design viewpoint, one might  be interested
in the precise reproduction of other functions.
So a natural idea 
is to search for analogous operators 
that fix all functions in a given two-dimensional space, possibly different from
the affine functions, and still converge to the identity. 

Here we explore such  generalized Bernstein operators $B_{n}^{f_1}$ on  polynomial spaces,
where fixing the constant function $\mathbf{1}$ and an injective  polynomial $f_1$ is achieved,
when possible, by modifying the location of the nodes $t_{n, k }$ 
(instead of having  $t_{n, k } =  a+\frac{k}{n}\left( b-a\right)$, as in (\ref{defBPr})). A motivation for this approach
is that it allows us to keep the Bernstein bases unchanged, a desirable feature given
their  several  optimality properties, cf. for instance \cite{Fa}. Multiplying by $-1$ if needed,
we may assume that $f_1$ is increasing.

We shall  show  (cf. Theorem \ref{Thm3})  that given
any  polynomial $f_1(x)$, strictly increasing on $[a,b]$, and of degree  $m$, it is always possible to find a generalized
Bernstein operator fixing $\mathbf{1}$ and $f_1$, on the
space $\mathbb{P}_{n}[a,b]$ with the standard Bernstein basis, 
provided that  $n\ge m$ and that $n$ is ``sufficiently large".  The special case $f_1(x) = x^j$,
$[a,b] = [0,1]$, had been previously solved in \cite[Proposition 11]{AKR08b};
on the other hand, it is known that such operators do not exist if we are
required to fix $f_0(x) = x^i$ and $f_1(x) = x^j$ on $[0,1]$, $1 \le i <  j$,
cf. \cite[Theorem 2.1]{Fi}. Regarding the 
meaning of ``sufficiently large", in general $n$ and the degree $m$ are not
comparable, i.e., there is no constant $C >0$ such that 
 $B_{n}^{f_1}$ is well defined for all $n \ge C m$.  We shall see  that
 for the family 
$f_{1,t} (x) = ( x - t)^3$ on $[0,1]$, where $t\in (0, 1/2)$, we must have
$n  > 1/t$ (cf. Theorem \ref{degree3}).

Denote by $\gamma_{n,k}$, $k = 0, \dots, n$ the  coordinates of $f_1$
with respect to
the Bernstein bases. Since we want to fix $f_1$, the nodes must be given
by $t_{n,k} = f_1^{-1} (\gamma_{n,k})$,  the injectivity of $f_1$ being 
used at this point: uniqueness of the coordinates entails that the
nodes are also uniquely determined. However, it need not be true 
  that all 
$f_1^{-1} (\gamma_{n,k}) \in [a,b]$,  (cf. Theorem \ref{degree3}). 
We prove that if $n\ge m$
is sufficiently large 
then indeed all   the coordinates 
$\gamma_{n,k}$ belong to $ [f_1(a), f_1 (b)]$. The proof proceeds by showing 
 that we have
$\left|\gamma_{n,k} - f_1\left( a+\frac{k}{n}\left( b-a\right)
\right)\right| = O(1/n)$, where the implicit constant in the big
$O$ notation depends only on $f_1$ over the interval $[a,b]$, but not on $k$ nor $n$ (cf. 
Theorem \ref{unifapprox}).

A considerable difference arises between the cases where $f_1^\prime$ vanishes
at some point inside $(a,b)$, and where $f_1^\prime >0$
on $(a,b)$: under the latter condition, the nodes form an increasing sequence and
the separation between consecutive nodes is bounded by $O(1/n)$, while
if $f_1^\prime (x_0) = 0$ for some $x_0 \in (a,b)$, then there are 
reversals of the nodes no matter how big $n$ is, 
and the difference between consecutive  nodes
can be distinctly larger than $O(1/n)$ (cf. Theorem \ref{Thm1},  Theorem \ref{K/n}, and Example  \ref{far}). When $f_1^\prime$ vanishes somewhere inside $(a,b)$,
letting $s - 1$ be the largest order of all such zeros, we are able
to prove that the separation between consecutive nodes is bounded by $O(1/n^{1/s})$
(cf. Theorem \ref{sHolder}), so despite the order reversal of some nodes, we
still have convergence to the identity.

Let $\omega \left( f, t\right)$ stand for the modulus of continuity of
a uniformly continuous function $f$.
A consequence of the preceding  results is that when $f_1^\prime >0$,
	\begin{equation*}
\left\vert  B_{n}^{f_{1}}f \left( x\right) -
B_{n}f \left( x\right) \right\vert \leq 
\omega \left( f, K n^{- 1}\right)
\end{equation*} 
for some constant $K > 0$,
while if $s - 1$ is the largest order of all the zeros of  $f_1^\prime$
inside $(a,b)$, then 
\begin{equation*}
\left\vert  B_{n}^{f_{1}}f \left( x\right) -
B_{n}f \left( x\right) \right\vert \leq 
\omega \left( f, K^\prime n^{- 1/s}\right)
\end{equation*} 
for some  $K^\prime > 0$.
Recalling that there is a
  $C  > 0$ (which depends on $[a,b]$ only) such that
\begin{equation*}
\left\vert  f \left( x\right) -
B_{n}f \left( x\right) \right\vert \leq 
C \omega \left( f,  n^{- 1/2}\right),
\end{equation*} 
 we see that up to some constant and for $s\le 2$,
the rate of approximation of the generalized Bernstein operators in terms
of the modulus of continuity, is no significantly worse than that for the classical operator: 
\begin{equation*}
	\left\vert  f \left( x\right) -
	B_{n}^{f_1}f \left( x\right) \right\vert \leq 
	(K^\prime + C)  \ \omega  \left( f,  n^{- 1/2}\right).
\end{equation*}

Now let us place the preceding results in context.
Several variations of the Bernstein operators have been considered in the
literature to address the problem of fixing functions other than
$\mathbf{1}$ and $e_1 ( x) = x$, sometimes modifying the Bernstein
bases functions 
(consider, for instance, the nowadays called King's operators,
after \cite{Ki}). 
Also, similar questions have been asked about related positive
operators, (cf. for instance \cite{AcArGo}).

 Within the line of research followed here, previous work
has focused
on spaces different or more general than spaces of polynomials (cf. \cite{MoNe00}, \cite{AKR07},
\cite{AKR08b}, \cite{AKR08}, \cite{KR07b}, \cite{AR}, \cite{Ma}),  but 
convergence has also been studied
in M\"untz spaces, cf. \cite{AiMa}, and for rational Bernstein operators, 
see \cite{Rend14}.

Generally speaking, the situation regarding existence is well understood in the context of
extended Chebyshev spaces (which  generalize
the space of polynomials of degree at most $n$, by retaining the bound on the
number of zeros) cf. \cite{AKR08b}: One considers a two
dimensional extended Chebyshev space $U_1$,  for which a generalized
Bernstein operator fixing it can always be defined, 
and inductively, this  definition is extended to
$U_1 \subset U_2 \subset \cdots \subset U_n$, where  each $U_k$ is a
$k + 1$-dimensional extended Chebyshev space.

While the present paper returns to the classical polynomial spaces, its results
go beyond  the  setting of chains of extended Chebyshev spaces (starting with
dimension two)
for oftentimes such chains, fixing $\mathbf{1}$ and $f_1$,  are  impossible to generate.

Understanding the polynomial case is a natural starting point 
towards possible generalizations to other spaces of functions, and
it sheds light on the usefulness of different notions
regarding generalized Bernstein operators.
For instance, if one requires in the definition that the sequence of nodes be non-decreasing (as done in \cite{Ma}) this will lead to  better  properties of the operators from the viewpoint of shape preservation (see the example at the end of the paper) but existence will never be obtained 
if $f_1^\prime$ has a zero in $(a,b)$.

The authors are indebted to two anonymous referees for carefully and quickly reviewing this paper. Their
efforts have allowed us to remove  several errors and typos from the original manuscript.

\section{Definitions and background.}

\begin{definition} Let $U_{n}$  be an $n+1$ dimensional subspace of $C^{n}\left( \left[
	a,b\right], \mathbb{R}\right)$
	(in this paper we consider real valued functions only). A 
	Bernstein basis $\{p_{n,k}: k=0,\dots,n\}$ of $U_n$ is a basis
	with the property that each $p_{n,k}$ has a zero of order $k$ at $a$, and
	a zero of order 
	$n-k$ at $b$. The function $p_{n,k}$ might have additional zeros inside 
	$\left(a,b\right) $; this is not excluded by the preceding definition. A Bernstein basis is {\em non-negative} if for all
	$k= 0, \dots, n$, $p_{n,k} \ge 0$ on  $\left[ a,b\right]$, and
	{\em positive} if $p_{n,k} > 0$ on $\left(a,b\right)$. Finally,
	a non-negative Bernstein basis is {\em normalized} if $\sum_{k=0}^n p_{n,k} \equiv 1$.
\end{definition} 

It is easy to check that non-negative Bernstein bases are unique
up to multiplication by a positive scalar, and that normalized
Bernstein bases are unique.

\begin{definition} If $U_{n}$ has a non-negative Bernstein basis $\{p_{n,k}: k=0,\dots,n\}$, we define a
	{\em generalized Bernstein operator} $B_{n}:C\left[ a,b\right] \rightarrow U_{n}$
	by setting 
	\begin{equation}
	B_{n}\left( f\right) =\sum_{k=0}^{n}f\left( t_{n,k}\right) \alpha
	_{n,k}p_{n,k},  \label{eqBern}
	\end{equation}
	where the nodes $t_{n,0},...,t_{n,n}$ belong to the interval 
	$\left[ a,b\right]$, and the weights $\alpha_{n,0},...,\alpha_{n,n}$ are positive. 
\end{definition} 

We briefly comment on the rather weak assumptions made in the preceding
definition. Non-negativity of the functions $p_{n,k}$ and positivity of the weights $\alpha_{n,0},...,\alpha_{n,n}$ are required
so that the resulting operator is  positive, a natural property from the viewpoint of
shape preservation. Strict positivity
of the weights entails that all the basis functions are used in
the definition of the operator.
Additionally, the nodes must belong to $\left[ a,b\right]$. This is a natural condition, since in
principle the domain of definition of the functions being
considered is $\left[ a,b\right]$.  Note that no requirement is made in the preceding
definition about the
ordering of the nodes, and in particular, we do not ask that they  be {\em strictly  increasing},
i.e., 
that  $t_{n,0} < t_{n,1} < \cdots < t_{n,n}$. When we only have
$t_{n,0} \le t_{n,1} \le \cdots \le t_{n,n}$ we say that the sequence of nodes is
{\em increasing}, or equivalently, {\em non-decreasing}.

\vskip .2 cm

The problem of
existence, as studied in  \cite{AKR08b} and \cite{AKR08}, arises when we choose two functions $f_{0},f_{1}\in U_{n}$, such that $f_{0} >0 $,  
$f_{1}/f_{0}$ is strictly increasing, and we  require that 
\begin{equation}
B_{n}\left( f_{0}\right) =f_{0}\text{ and }B_{n}\left( f_{1}\right) =f_{1}.
\label{eqBern2}
\end{equation}
If these equalities can be satisfied, they uniquely determine the location of the
nodes and the values of the coefficients, cf. \cite[Lemma 5]{AKR08b}; in other words, there is at most
one Bernstein operator $B_{n}$ of the form (\ref{eqBern}) satisfying (\ref
{eqBern2}).  
We will consistently use the following notation. Assume that $p_{n,k},$ 
$k=0,...,n$, is a Bernstein basis of the space $U_{n}$. Given 
$f_{0},f_{1}\in U_{n}$, there exist coefficients $\beta_{n,0},...,\beta_{n,n}$
and $\gamma_{n,0},...,\gamma_{n,n}$ such that 
\begin{equation}
f_{0}\left( x\right) =\sum_{k=0}^{n}\beta_{n,k}p_{n,k}\left( x\right) \text{
and }f_{1}\left( x\right) =\sum_{k=0}^{n}\gamma_{n,k}p_{n,k}\left( x\right) .
\label{eqeq}
\end{equation}

The following elementary fact regarding bases will be used throughout
(cf. \cite[Lemma5]{AKR08b}):
suppose there exists a generalized Bernstein operator $B_n$ of the form
 (\ref{eqBern}), fixing $f_0$ and $f_1$;
then it must be the case that for each $k= 0, \dots, n$, 
\begin{equation}
\beta_{n,k} = f_0 (t_{n,k}) \ \alpha_{n,k} \text{ \ \ \ 
	and \ \ \ } \gamma_{n,k}  = f_1 (t_{n,k}) \  \alpha_{n,k}.
\label{bases}
\end{equation}
If $f_0 =\mathbf{1} $, using $\tilde p_{n,k} := 
\alpha_{n,k}p_{n,k}$ instead of $p_{n,k} $,  we may assume that the Bernstein basis is normalized, 
and then we can take the coordinates of $\mathbf{1} $ and the weights to
be 1. Thus,  we have that
\begin{equation} \label{one}
1 = \alpha_{n,k} = \beta_{n,k}
 \text{ \ \ \ 
	and \ \ \ } t_{n,k}= f_{1}^{-1}\left(\gamma_{n,k}\right).
\end{equation}
In this case, we denote the generalized Bernstein operator by $B_{n}^{f_1}$.

\section{Characterizing when nodes  increase.}

For the remainder of the paper,   $p_{n,k}\left( x\right) $ will denote the usual Bernstein basis function 
\begin{equation} \label{standard}
p_{n,k}\left( x\right) =\binom{n}{k}\frac{\left( x-a\right) ^{k}\left(
b-x\right) ^{n-k}}{\left( b-a\right) ^{n}}
\end{equation}
on  $\mathbb{P}_n [a,b]$, the space of polynomials 
on $[a,b]$, of degree bounded
by $n$.

While Theorem \ref{Thm1} and its corollary
can be presented in greater generality, in order to minimize technicalities  we
shall restrict ourselves to the  polynomial spaces $\mathbb{P}_n [a,b]$
with their standard Bernstein bases, which is all we shall need in this  paper.

The following lemma is well known.

\begin{lemma} \label{lincreasing}
Let  $f_{1}$ be a  polynomial on $\left[ a,b\right] ,$ of degree bounded by $n \ge 1$,
with coordinates given by
$
f_{1}\left( x\right) =\sum_{k=0}^{n}\gamma _{n,k}p_{n,k}\left( x\right),
$
and let  $\frac{d}{dx}
f_{1} (x) =\sum_{k=0}^{n-1}w_{k}p_{n-1,k} (x)$. 
 For $k=1,\dots , n,$ we have  
 $$
  \gamma_{n,k}- \gamma_{n,k-1}  = \frac{(b -a )  w_{k - 1}}{n} .
  $$
\end{lemma}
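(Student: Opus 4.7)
The plan is to differentiate the given Bernstein basis representation of $f_1$ term by term, exploit the standard derivative formula for Bernstein basis functions, and then identify coefficients via the uniqueness of the expansion in the basis $\{p_{n-1,k}\}_{k=0}^{n-1}$.

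First I would establish the key auxiliary identity
$$p_{n,k}^\prime (x) = \frac{n}{b-a}\bigl(p_{n-1,k-1}(x) - p_{n-1,k}(x)\bigr),$$
with the convention $p_{n-1,-1} \equiv p_{n-1,n} \equiv 0$. This is obtained by direct differentiation of the explicit formula \eqref{standard} for $p_{n,k}$, together with the binomial identities $k\binom{n}{k} = n\binom{n-1}{k-1}$ and $(n-k)\binom{n}{k} = n\binom{n-1}{k}$; this is routine and I would not grind through it.

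Next, inserting this identity into $f_1^\prime (x) = \sum_{k=0}^n \gamma_{n,k} p_{n,k}^\prime(x)$ gives
$$f_1^\prime (x) = \frac{n}{b-a}\sum_{k=0}^n \gamma_{n,k} \bigl(p_{n-1,k-1}(x) - p_{n-1,k}(x)\bigr).$$
Splitting into two sums, shifting the index $j = k-1$ in the first, and using the convention to drop the boundary terms, the telescoping rearranges to
$$f_1^\prime (x) = \frac{n}{b-a}\sum_{k=0}^{n-1} (\gamma_{n,k+1} - \gamma_{n,k})\, p_{n-1,k}(x).$$

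Finally, since $\{p_{n-1,k}\}_{k=0}^{n-1}$ is a basis of $\mathbb{P}_{n-1}[a,b]$, the representation of $f_1^\prime$ in this basis is unique. Comparing with $f_1^\prime(x) = \sum_{k=0}^{n-1} w_k p_{n-1,k}(x)$ yields $w_k = \frac{n}{b-a}(\gamma_{n,k+1} - \gamma_{n,k})$, and reindexing $k \mapsto k-1$ gives the claimed formula. No step is really an obstacle here; the only mild care needed is the bookkeeping of boundary indices when splitting and shifting the two sums.
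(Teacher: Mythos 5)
Your proof is correct and follows essentially the same approach as the paper: differentiate the Bernstein expansion using the standard derivative identity for $p_{n,k}$, telescope, and identify coefficients by uniqueness of the $\{p_{n-1,k}\}$ expansion. The paper writes out the $k=0$, $0<k<n$, and $k=n$ cases separately rather than using the $p_{n-1,-1}\equiv p_{n-1,n}\equiv 0$ convention, but that is a purely cosmetic difference.
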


\begin{proof} Taking the derivative of the Bernstein
	bases functions, we get, for $k = 0$,
	\begin{equation}
	\frac{d}{dx}p_{n,0}(x )=  - \frac{n}{b - a} \  p_{n-1,0} (x), 
	\label{eqder0}
	\end{equation}
	for  $0 < k  < n$,
	\begin{equation}
	\frac{d}{dx}p_{n,k}(x ) =   \frac{n}{b - a} \ \left( p_{n-1, k - 1} (x) -  p_{n-1,k} (x)\right),
	\label{eqderk}
	\end{equation}
	and for $k = n$, 
	\begin{equation}
	\frac{d}{dx}p_{n,n}(x )=   \frac{n}{b - a} \  p_{n-1, n - 1} (x). 
	\label{eqder0}
	\end{equation}
	Now using the preceding expressions and rearranging terms we get
	\begin{equation}
	\sum_{k=0}^{n-1}w_{k}p_{n-1,k} (x)
	=
	\frac{d}{dx} f_{1}\left( x\right) 
	=
	\sum_{k=0}^{n}\gamma _{n,k} \  \frac{d}{dx} p_{n,k}(x ) 
	\label{posw}
	\end{equation}
		\begin{equation}
	=   \frac{n}{b - a} \ \left(   \sum_{k=1}^{n} (\gamma _{n,k} - \gamma _{n,k - 1}) p_{n-1,k -1} (x) \right), 
	\label{posw2}
	\end{equation}
	so $w_{k - 1} =   \frac{n}{b - a} \ \left(   \gamma _{n,k} - \gamma _{n,k - 1}\right)$.
\end{proof}

\begin{theorem} \label{Thm1}
Let  $f_{1}$ be a strictly
increasing polynomial on $\left[ a,b\right] ,$ of degree bounded by $n \ge 1$,
with coordinates given by
$
f_{1}\left( x\right) =\sum_{k=0}^{n}\gamma _{n,k}p_{n,k}\left( x\right).
$
Then the following  are equivalent:

a)  There exists a generalized  Bernstein operator $B_{n}^{f_1}:C\left[ a,b\right]
\rightarrow \mathbb{P}_n\left[ a,b\right] $, fixing $\mathbf{1}$ and $f_1$, and defined by  
\begin{equation}
B_{n}^{f_1}f\left( x\right) =\sum_{k=0}^{n}f\left( t_{n,k}\right)p_{n,k}\left( x\right),  \label{defBPab}
\end{equation}
where $a= t_{n, 0} \le \dots \le t_{n,n} = b$ 
(resp.  $a= t_{n, 0} < \dots < t_{n,n} = b$).

b) For $k=0,\dots , n,$ the coefficients $\gamma_{n,k}$,
are increasing (resp. strictly increasing).

c) For $k=0,\dots , n-1,$ the coefficients $w_{k}$  defined by $\frac{d}{dx}
f_{1}=\sum_{k=0}^{n-1}w_{k}p_{n-1,k}$,  are non-negative (resp. strictly positive).
\end{theorem}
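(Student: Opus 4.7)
The plan is to dispose of the equivalence (b)$\iff$(c) first, since it is immediate from Lemma \ref{lincreasing}: the identity $\gamma_{n,k} - \gamma_{n,k-1} = (b-a) w_{k-1}/n$ shows that the sequence $\gamma_{n,0},\dots,\gamma_{n,n}$ is non-decreasing (resp.\ strictly increasing) precisely when every $w_{k-1}$ is non-negative (resp.\ strictly positive).

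Next I would record a preliminary observation that pins down the boundary coordinates. From the explicit form (\ref{standard}) one has $p_{n,0}(a)=p_{n,n}(b)=1$ and $p_{n,k}(a)=p_{n,k}(b)=0$ for the remaining indices. Evaluating $f_1(x)=\sum_{k=0}^{n}\gamma_{n,k}p_{n,k}(x)$ at $x=a$ and $x=b$ therefore yields $\gamma_{n,0}=f_1(a)$ and $\gamma_{n,n}=f_1(b)$.

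For (a)$\Rightarrow$(b), I would use formula (\ref{one}), which is forced by the requirement that $B_n^{f_1}$ fix both $\mathbf{1}$ and $f_1$: the nodes must satisfy $\gamma_{n,k}=f_1(t_{n,k})$. Since $f_1$ is strictly increasing, the (strict or non-strict) monotonicity of the nodes transfers directly to monotonicity of the $\gamma_{n,k}$, matching the boundary values computed above. For (b)$\Rightarrow$(a), the hypothesis that the $\gamma_{n,k}$ form an increasing sequence, together with $\gamma_{n,0}=f_1(a)$ and $\gamma_{n,n}=f_1(b)$, guarantees that each $\gamma_{n,k}$ lies in $[f_1(a),f_1(b)]$, so I may legitimately define $t_{n,k}:=f_1^{-1}(\gamma_{n,k})\in[a,b]$. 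Strict monotonicity of $f_1^{-1}$ on $[f_1(a),f_1(b)]$ converts the (strict/non-strict) monotonicity of the $\gamma_{n,k}$ back into the corresponding ordering of the nodes, with $t_{n,0}=a$ and $t_{n,n}=b$. Taking all weights equal to $1$ and setting $B_{n}^{f_1}f(x):=\sum_{k=0}^{n}f(t_{n,k})p_{n,k}(x)$, I verify that $B_{n}^{f_1}\mathbf{1}=\sum_{k=0}^{n}p_{n,k}\equiv \mathbf{1}$ (the standard basis is normalized) and $B_{n}^{f_1}f_1=\sum_{k=0}^{n}f_1(t_{n,k})p_{n,k}=\sum_{k=0}^{n}\gamma_{n,k}p_{n,k}=f_1$, so that $B_n^{f_1}$ has exactly the desired form.

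There is no real obstacle here; the heavy lifting is carried out by Lemma \ref{lincreasing} and by the uniqueness relation (\ref{one}) already established in the preliminaries, so the argument is essentially bookkeeping together with the observation that the boundary coordinates $\gamma_{n,0},\gamma_{n,n}$ automatically coincide with $f_1(a),f_1(b)$.
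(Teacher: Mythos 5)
Your proof is correct and follows essentially the same route as the paper's: both establish (b)$\iff$(c) directly from Lemma \ref{lincreasing}, and both handle (a)$\iff$(b) via the relation $\gamma_{n,k}=f_1(t_{n,k})$ from (\ref{one}) together with the boundary identities $\gamma_{n,0}=f_1(a)$, $\gamma_{n,n}=f_1(b)$ to guarantee the nodes lie in $[a,b]$. The only difference is that you spell out slightly more explicitly why monotonicity plus the endpoint values forces every $\gamma_{n,k}$ into $[f_1(a),f_1(b)]$, which the paper leaves implicit.
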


\begin{proof} The equivalence between a) and b) is immediate from (\ref{one}),
where it is noted that if $f_0 = \mathbf{1}$, then $\alpha_{n,k} = \beta _{n,k} = 1$ for $k=0, \dots, n$. Thus, if $B_{n}^{f_1}$ fixes $f_1$ and 
the nodes
are increasing with  $k$, since 
$f_1 (t_{n,k}) = \gamma_{n,k}$, the coordinates $\gamma _{n,k}$ also increase,
while if the coordinates increase, so do the nodes given by 
$t_{n,k} = f_1^{-1} (\gamma_{n,k})$, and, provided that the nodes belong to
$[a,b]$,  the operator defined by (\ref{defBPab})
clearly fixes $f_1$  and $ \mathbf{1}$.  So it is enough to check that
$t_{n,0} = a$ and  $t_{n,n} = b$. But this is obvious, since
 $f_{1}\left( a\right) = \gamma _{n,0}  \ p_{n,0} \left( a\right) =  \gamma _{n,0}$,
  and  $f_{1}\left( b\right) = \gamma_{n,n} \ p_{n,n}\left( b\right) =  \gamma _{n,n} .$ The strictly increasing case is identical.

Regarding  the equivalence of b) and c), by the preceding Lemma,
$
  \gamma_{n,k}- \gamma_{n,k-1}  = \frac{(b -a )  w_{k - 1}}{n}
  $, so for $k=1,\dots , n,$
$w_{k - 1}  \ge 0$ (resp. $w_{k - 1}  > 0$)
if and only if  $\gamma _{n,k} \ge  \gamma _{n,k - 1}$ 
(resp. $\gamma _{n,k} >  \gamma _{n,k - 1}$).
\end{proof}

Thus, we obtain the following  necessary condition for the existence of a
generalized Bernstein operator defined via an increasing 
sequence of nodes, with respect to the standard Bernstein
basis on $\mathbb{P}_n [a,b]$. 

\begin{corollary} \label{increasing}
Suppose that there exists a generalized  Bernstein operator 
 $B_{n}^{f_1}:C\left[ a,b\right]
\rightarrow \mathbb{P}_n\left[ a,b\right] $ with increasing nodes,
fixing the constant function $\mathbf{1}$ and a strictly increasing polynomial 
$f_{1}.$ Then $f_{1}^{\prime }\left( x\right) >0$ for all $x$ in the open
interval $\left( a,b\right) .$ If the sequence of nodes is strictly
increasing, then $f_{1}^{\prime } >0$ on the closed
interval $\left[a,b \right].$
\end{corollary}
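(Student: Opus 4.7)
The plan is to deduce this corollary directly from Theorem \ref{Thm1}, using the equivalence between the increasing (resp. strictly increasing) character of the nodes and the sign of the coefficients $w_k$ in the expansion $f_1' = \sum_{k=0}^{n-1} w_k p_{n-1, k}$, together with the positivity properties of the standard Bernstein basis.

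First, assume that the nodes $t_{n,0} \le \cdots \le t_{n,n}$ are merely non-decreasing. By Theorem \ref{Thm1}, parts (a) and (c), we have $w_k \ge 0$ for every $k = 0, \dots, n-1$. Since on the open interval $(a,b)$ every basis function $p_{n-1,k}$ is strictly positive, the expression $f_1'(x) = \sum_{k=0}^{n-1} w_k p_{n-1,k}(x)$ is a non-negative combination of strictly positive terms. If we had $f_1'(x_0) = 0$ at some $x_0 \in (a,b)$, each summand would have to vanish, forcing $w_0 = \cdots = w_{n-1} = 0$, hence $f_1' \equiv 0$. This contradicts the strict monotonicity of $f_1$. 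Therefore $f_1'(x) > 0$ for all $x \in (a,b)$.

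Now suppose in addition that the nodes are strictly increasing. Theorem \ref{Thm1} then gives $w_k > 0$ for every $k = 0, \dots, n-1$. At the endpoints only one basis function is nonzero: by the formula \eqref{standard}, $p_{n-1, 0}(a) = 1$ and $p_{n-1, k}(a) = 0$ for $k \ge 1$, and symmetrically $p_{n-1, n-1}(b) = 1$ while $p_{n-1, k}(b) = 0$ for $k < n-1$. Consequently $f_1'(a) = w_0 > 0$ and $f_1'(b) = w_{n-1} > 0$, while positivity on $(a,b)$ has already been shown. Thus $f_1' > 0$ on the whole closed interval $[a,b]$.

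There is no real obstacle here beyond being careful to invoke the strict positivity of $f_1$'s slope only via the strict monotonicity hypothesis (non-negativity of the $w_k$ alone does not preclude $f_1' \equiv 0$), and to note that at the endpoints only the extreme basis functions contribute, so that the strict inequality needed for the second assertion is precisely what the strict version of Theorem \ref{Thm1}(c) provides.
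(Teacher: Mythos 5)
Your proof is correct and follows essentially the same route as the paper: both invoke Theorem \ref{Thm1}(c) to obtain $w_k \ge 0$ (resp. $w_k > 0$) and then exploit the positivity of the Bernstein basis functions on $(a,b)$. The only cosmetic differences are that you argue the first part by contradiction whereas the paper argues directly from ``some $w_j>0$,'' and for the second part you evaluate $f_1'$ at the endpoints where only $p_{n-1,0}$ (resp.\ $p_{n-1,n-1}$) survives, while the paper uses the partition-of-unity bound $f_1' \ge \min_k w_k > 0$ on all of $[a,b]$ at once.
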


\begin{proof}
If the sequence of nodes is increasing, by Theorem \ref{Thm1}, there are non-negative coefficients $
w_{k},k=0,\dots ,n-1,$ such that
$
f_{1}^{\prime }\left( x\right) =\sum_{k=0}^{n-1}w_{k}p_{n-1,k} (x).
$
Since $f_{1}^{\prime }$ is not identically zero, at least one of the coefficients is strictly positive, say $w_{j} > 0$.
And since  $p_{n-1,j} > 0$  on $\left( a,b\right) $, we also have
$f_{1}^{\prime } >0$  on $\left( a,b\right) .$ 

If the sequence of nodes is strictly increasing, then all the
coefficients $w_{k}$ are strictly positive. Since the standard Bernstein
basis on $\mathbb{P}_{n - 1} [a,b]$ forms a partition of unity, for all $x \in [a,b]$, 
$
f_{1}^{\prime }\left( x\right) =\sum_{k=0}^{n-1}w_{k}p_{n-1,k}  \ge
 \min_{k= 0, \dots, n-1} w_k \mathbf{1} (x) > 0
$.
\end{proof}

\begin{corollary} Fix  $k \ge 1$ and $a < 0 < b$. There is no generalized 
	Bernstein
	operator $B_{n}^{f_1}: C\left[ a,b\right] \rightarrow 
	\mathbb{P}_{n}\left[ a,b\right] $ fixing $\mathbf{1}$ and $f_{1} := x^{2 k + 1}$, and
	 defined by a non-decreasing sequence of nodes.
\end{corollary}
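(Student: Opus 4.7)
The plan is to apply Corollary \ref{increasing} directly and derive a contradiction from the vanishing of the derivative of $f_1$ at the origin. First I would verify that the hypotheses of the corollary are met: the function $f_1(x) = x^{2k+1}$ is a polynomial and is strictly increasing on every interval $[a,b]$, since its derivative $f_1^\prime(x) = (2k+1)x^{2k}$ is non-negative everywhere and vanishes only at $x = 0$, which is not enough to destroy strict monotonicity.

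Next, suppose for contradiction that a generalized Bernstein operator $B_n^{f_1}$ of the required form exists, defined by a non-decreasing sequence of nodes. Then Corollary \ref{increasing} forces $f_1^\prime(x) > 0$ for every $x \in (a,b)$. Since $a < 0 < b$, the point $x = 0$ lies in the open interval $(a,b)$, and evaluating gives $f_1^\prime(0) = (2k+1)\cdot 0^{2k} = 0$ (using $k \ge 1$, so $2k \ge 2 > 0$). This contradicts the strict positivity just established, completing the argument.

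There is essentially no obstacle here; the entire content of the corollary does the heavy lifting. The only points worth being careful about are (i) checking that $f_1$ really is strictly increasing (so that the framework applies and the nodes $t_{n,k} = f_1^{-1}(\gamma_{n,k})$ are well defined in principle), and (ii) noting that $0 \in (a,b)$, which is exactly the hypothesis $a < 0 < b$ placed on the interval. The fact that we only assume non-decreasing (not strictly increasing) nodes is handled by the first conclusion of Corollary \ref{increasing}, which already yields $f_1^\prime > 0$ on the \emph{open} interval $(a,b)$, and that is all we need to contradict $f_1^\prime(0) = 0$.
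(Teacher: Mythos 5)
Your proof is correct and is exactly the intended argument; the paper states this corollary without proof because it follows immediately from Corollary \ref{increasing} in precisely the way you describe, using that $f_1(x) = x^{2k+1}$ is strictly increasing, that $0 \in (a,b)$ since $a<0<b$, and that $f_1'(0) = 0$ contradicts the conclusion $f_1' > 0$ on $(a,b)$. Your remark that the paper's ``increasing'' means ``non-decreasing'' (so the first conclusion of Corollary \ref{increasing} is the one needed) is also the right reading.
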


\section{Strictly positive polynomials have positive Bernstein coordinates,
	eventually.}

The title of this section recalls an old result of S. Bernstein, cf. 
\cite{Be1}, \cite{Be2}, according to which a polynomial $g > 0$ on 
$[a,b]$ has positive Bernstein coordinates in $\mathbb{P}_N [a,b] $, 
provided $N\ge \operatorname{deg} (g)$ is large enough. The case 
$g > 0$ on 
$(a,b) $ reduces to the previous one by factoring the zeros (if any) at the
endpoints, and then we conclude that the Bernstein coordinates are non-negative.
Note however that if $g \ge  0$  on 
$[a,b] $ and $g(c) = 0$ for some $c  \in 
(a,b) $, then some coordinate of $g$ must be negative, since the
Bernstein bases functions are positive on 
$(a,b) $.

Two  additional proofs of Bernstein's  result can be found in the answers to 
\cite[Problem 49 of Part VI ]{PoSz}.  It also follows from Theorem \ref{unifapprox} below,
which gives a proof analogous to Bernstein's original, the difference being that we
are interested in uniform estimates.

Let us remind the reader
of the fact that strictly positive polynomials can have some negative Bernstein coordinates, so the ``sufficiently large" clause is needed. 
The next example (essentially, the Example from \cite[Pg. 4684]{PoRe})  illustrates the fact that the 
necessary condition  ``$f_{1}^{\prime }  >0$ on
$\left( a,b\right)$" from Corollary \ref{increasing} to ensure
that nodes are non-decreasing, 
is not sufficient. In fact, 
even the assumption  
$f_{1}^{\prime }  >0$ on
$\left[ a,b\right]$ is not sufficient. 

\begin{example}  For $n = 2$ and 
	$[a,b] =[0,1]$, the
positive function 
	$f_1^\prime (x) := (x - 1/2)^2 + 1/8$ satisfies
	$f_1^\prime (x) 
	= 3 p_{2,0} (x) /8 -  p_{2,1} (x) /8 +  3 p_{2,2} (x) /8$.
	Consider the primitive with constant term zero,
	given by  $f_1 (x) = 3 x /8 - x^2/2 + x^3/3$. By Theorem \ref{Thm1}, if
	a generalized Bernstein operator fixing
	$\mathbf{1}$ and $f_1 (x)$  exists, then it is not defined via an increasing sequence
	of nodes. 
	Actually, in this case the operator $B_3^{f_1}$ does exist:
	 by computing coordinates we find that 
	$f_1 (x) =  p_{3,1} (x)/ 8 +  p_{3,2} (x)/12 +  5 p_{3,3} (x)/24$,
	and  since $f_1 (0) = 0$ and $f_1 (1) = 5/24$,
	all the nodes belong to $[0,1]$;
	we have $0 = t_{3,0} < t_{3,2} = f_1^{-1}(1/12) < t_{3,1} = f_1^{-1}(1/8)
	< t_{3,3} = 1$. We shall see
	 that for some $N > 3$ the nodes become disentangled, forming an
	increasing sequence. And once they become increasing, they stay that
	way, by Theorem \ref{Thm1} and  the next result,  which is a direct consequence of degree elevation. 
\end{example}

\begin{proposition}
	If $g\left( x\right) =\sum_{k=0}^{n}w_{n,k}p_{n,k}\left( x\right) $ on $\left[ a ,b\right] $ has non-negative coefficients $w_{n,k}$, then the
	Bernstein coefficients $w_{N,k}$ of $g\left( x\right) $ with respect to $
	p_{N,k}$, $k=0,...,N,$ for any
	$N\geq n$, also satisfy $w_{N,k}\ge 0$.
\end{proposition}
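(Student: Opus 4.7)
The plan is to invoke the standard degree elevation formula for the Bernstein basis and iterate it. Specifically, for $0 \le k \le n$ one has the identity
\begin{equation*}
p_{n,k}(x) = \frac{n+1-k}{n+1}\, p_{n+1,k}(x) + \frac{k+1}{n+1}\, p_{n+1,k+1}(x),
\end{equation*}
which can be checked directly from the definition in (\ref{standard}) by multiplying $p_{n,k}$ by $\mathbf{1} = \frac{(x-a)+(b-x)}{b-a}$ and regrouping terms, using the elementary identities $\binom{n}{k}\frac{n+1-k}{n+1} = \binom{n+1}{k}\frac{1}{1}\cdot\frac{1}{?}$ (which one rewrites cleanly as $\binom{n}{k} = \frac{n+1-k}{n+1}\binom{n+1}{k}$ and $\binom{n}{k} = \frac{k+1}{n+1}\binom{n+1}{k+1}$).

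Substituting this identity into $g(x) = \sum_{k=0}^{n} w_{n,k} p_{n,k}(x)$ and reindexing, I would obtain $g(x) = \sum_{k=0}^{n+1} w_{n+1,k}\, p_{n+1,k}(x)$ with
\begin{equation*}
w_{n+1,k} = \frac{k}{n+1}\, w_{n,k-1} + \frac{n+1-k}{n+1}\, w_{n,k},
\end{equation*}
under the convention $w_{n,-1} = w_{n,n+1} = 0$. Since each $w_{n+1,k}$ is a convex combination of the $w_{n,j}$, the hypothesis $w_{n,j} \ge 0$ immediately gives $w_{n+1,k} \ge 0$ for every $k$.

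Finally, I would iterate this one-step elevation $N - n$ times, and appeal to the fact that Bernstein coordinates in $\mathbb{P}_N[a,b]$ are unique (since $\{p_{N,k}\}_{k=0}^{N}$ is a basis), so the coefficients produced by iterated elevation are necessarily the $w_{N,k}$ referred to in the statement. This yields $w_{N,k} \ge 0$ for all $N \ge n$ and all $k$.

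There is no real obstacle here: the only mild point to be careful about is checking the one-step elevation identity, which is a routine binomial-coefficient manipulation, and the bookkeeping of the boundary indices $k = 0$ and $k = n+1$, which are handled by the stated convention $w_{n,-1} = w_{n,n+1} = 0$.
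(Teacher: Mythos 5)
Your proof is correct and follows essentially the same route as the paper: reduce to the one-step degree elevation $n \to n+1$ by induction, substitute the elevation identity $p_{n,k} = \frac{n+1-k}{n+1}p_{n+1,k} + \frac{k+1}{n+1}p_{n+1,k+1}$ into the expansion of $g$, and observe that the resulting coefficients are non-negative (in fact convex, as you note) combinations of the original ones. Your version is slightly cleaner in spelling out the explicit elevation weights and the convention at the boundary indices, but the idea is identical to the paper's argument.
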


\begin{proof}
	By  an induction argument it suffices to prove it for $N=n+1.$ 
	Since  
	$$p_{n,k} = a_{n + 1, k} p_{n+1,k} +  a_{n + 1, k+ 1} p_{n+1,k+1},
	$$
	with constants $a_{n + 1, k} > 0$ and  $ a_{n + 1, k+ 1} > 0$,  
	\begin{eqnarray*}
		g\left( x\right)
		&=& \sum_{k=0}^{n}w_{n,k}p_{n,k}\left( x\right) = \sum_{k=0}^{n}w_{n,k}  a_{n + 1, k}  p_{n+1,k}+\sum_{k=0}^{n}w_{n,k}  a_{n + 1, k+ 1}  p_{n+1,k+1} \\
		&=&w_{n,0} a_{n + 1, 0} p_{n+1,0} + 
		\sum_{k=1}^{n}\left( w_{n,k}+w_{n,k -1}\right)  a_{n + 1, k} 
		p_{n+1,k} + w_{n,n}  a_{n + 1, n+ 1} p_{n + 1,n + 1}.
	\end{eqnarray*}
\end{proof}

\section{Generalized Bernstein operators fixing $\mathbf{1}$ and a strictly increasing  polynomial.}

In this section we prove that for $n$ sufficiently large, there is a generalized Bernstein
operator fixing $\mathbf{1}$ and a nonconstant polynomial $f_1$ under the  assumption 
$f_1^\prime \ge 0$ on $[a,b]$. More precisely, we prove the following theorem, 
presenting some preliminary results before the proof.

\begin{theorem} \label{Thm3} Given any non constant polynomial	$f_{1}$
	with $f_{1}^{\prime }  \ge 0$ on
	$\left[ a,b\right]$, for every  $N$  sufficiently large  there
	is a generalized Bernstein operator $B_{N}^{f_1}$ based on $\mathbb{P}_N [a,b] $,
	which fixes both $\mathbf{1}$ and $f_1$. If there exists an $x_0\in (a,b)$ 
	such that $f_{1}^{\prime } (x_0) = 0$, the sequence of nodes will fail
	to be non-decreasing for all $N$ such that $B_{N}^{f_1}$ exists. Fix $N$ large 
	enough 
	so that $B_{N}^{f_1}$ is well defined. If $f_{1}^{\prime }$
	has a zero of order $s_1$ at $a$, the first $s_1 + 1$ nodes are equal to
	$a$, and if $f_{1}^{\prime }$
	has a zero of order $s_2$ at $b$, the last $s_2 + 1$ nodes are equal to
	$b$.  If $f_{1}^{\prime }  > 0$ on
	$\left(a,b\right)$, then the nodes in 	$\left(a,b\right)$ form a strictly
	increasing sequence, while if $f_{1}^{\prime }  > 0$ on
	$\left[ a,b\right]$, then all nodes form a strictly increasing sequence.
\end{theorem}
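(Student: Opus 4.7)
The plan is to reduce the whole theorem to two factorizations of $f_1$, one anchored at each endpoint, together with Bernstein's classical positivity theorem recalled at the start of Section 4. First note that $f_1' \ge 0$ and $f_1$ non-constant force $f_1$ to be strictly increasing on $[a,b]$, because the polynomial $f_1'$ is not identically zero and so vanishes only on a finite set.

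For existence, I would write $f_1(x) - f_1(a) = (x-a)^{r_1} G(x)$, where $r_1 \ge 1$ is the order of the zero of $f_1 - f_1(a)$ at $a$ and $G(a) \neq 0$. The key observation is that $G > 0$ on the entire closed interval $[a,b]$: on $(a,b]$ this is immediate from the strict monotonicity of $f_1$, which gives $f_1(x) > f_1(a)$ for all $x > a$ (in particular at every interior zero of $f_1'$), and then $G(a) > 0$ follows by continuity. So $G$ is strictly positive on $[a,b]$, and Bernstein's theorem yields, for all $N$ sufficiently large, strictly positive Bernstein coordinates for $G$ in $\mathbb{P}_{N-r_1}[a,b]$. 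The elementary identity $(x-a)^{r_1} p_{N-r_1,j}(x) = C_{N,j}\, p_{N,j+r_1}(x)$ with $C_{N,j}>0$ then shows that the Bernstein coordinates of $f_1(x) - f_1(a)$ in $\mathbb{P}_N[a,b]$ vanish for $k=0,\dots,r_1-1$ and are strictly positive for $k\ge r_1$. Hence $\gamma_{N,k}\ge f_1(a)$, with equality precisely for $k<r_1$. A symmetric factorization $f_1(b) - f_1(x) = (b-x)^{r_2} H(x)$ with $H>0$ on $[a,b]$ produces $\gamma_{N,k}\le f_1(b)$. Thus all coordinates lie in $[f_1(a),f_1(b)]$ and the nodes $t_{N,k}:=f_1^{-1}(\gamma_{N,k})$ are well defined in $[a,b]$, establishing existence.

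Differentiating the factorization at $a$ shows $s_1 = r_1 - 1$, and injectivity of $f_1$ turns the equalities $\gamma_{N,0} = \dots = \gamma_{N,r_1-1} = f_1(a)$ and $\gamma_{N,r_1} > f_1(a)$ into $t_{N,0} = \dots = t_{N,s_1} = a$ and $t_{N,s_1+1} > a$; the symmetric statement at $b$ is analogous. The failure of non-decreasingness when $f_1'$ has an interior zero is the contrapositive of Corollary \ref{increasing}. For the strict-increase statements I would factor $f_1'(x) = (x-a)^{s_1}(b-x)^{s_2} h(x)$ with $h(a), h(b)\neq 0$; the hypothesis $f_1'>0$ on $(a,b)$ forces $h>0$ on $(a,b)$, and then by continuity $h>0$ on $[a,b]$. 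Bernstein's theorem applied to $h$, combined with the product formula above, gives strictly positive coefficients $w_k$ of $f_1'$ in $\mathbb{P}_{N-1}[a,b]$ for $k=s_1,\dots,N-1-s_2$ and zero coefficients elsewhere; Lemma \ref{lincreasing} then yields $\gamma_{N,s_1} < \gamma_{N,s_1+1} < \dots < \gamma_{N,N-s_2}$, so the interior nodes are strictly increasing. If moreover $f_1'>0$ on $[a,b]$ then $s_1 = s_2 = 0$ and the same argument produces strict monotonicity of all the nodes.

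The step I expect to be the main obstacle is the uniform control $\gamma_{N,k}\in[f_1(a),f_1(b)]$ near the boundary indices: the estimate $|\gamma_{N,k} - f_1(a + k(b-a)/N)| = O(1/N)$ of Theorem \ref{unifapprox} is of the same order as the main term $f_1(a+k(b-a)/N) - f_1(a)$ when $k$ is bounded, and therefore does not by itself settle the sign at small $k$. The factorization-and-Bernstein approach sketched above is designed precisely to bypass this difficulty, because $G$ remains strictly positive on $[a,b]$ even when $f_1'$ has interior zeros, so Bernstein's theorem supplies uniform sign information at every index in one stroke.
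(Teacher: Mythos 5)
Your proof is correct, and it takes a genuinely different and in fact cleaner route than the paper's. The paper factors the \emph{derivative} as $f_1'(x) = (x-a)^{s_1} g(x)(b-x)^{s_2}$, and since $g$ can still vanish inside $(a,b)$, it is forced into a three-region covering argument: Theorem \ref{unifapprox} applied to $f_1$ in the middle region gives $\gamma_{N,k} \in (f_1(a),f_1(b))$ there, while Theorem \ref{unifapprox} applied to $g$ near each endpoint gives $w_{N,k}\ge 0$ there, hence (via Lemma \ref{lincreasing}) monotone $\gamma_{N,k}$'s and therefore $t_{N,k}\ge t_{N,0}=a$ (resp.\ $\le t_{N,N}=b$). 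Your idea of factoring $f_1(x)-f_1(a)=(x-a)^{r_1}G(x)$ and $f_1(b)-f_1(x)=(b-x)^{r_2}H(x)$ bypasses this entirely, because strict monotonicity of $f_1$ forces $G>0$ and $H>0$ on the \emph{whole} closed interval regardless of where $f_1'$ vanishes inside; Bernstein's positivity theorem (recalled at the start of Section 4, and itself a consequence of Theorem \ref{unifapprox}) then controls all $N+1$ coordinates in a single stroke via the degree-shifting identity $(x-a)^{r_1}p_{N-r_1,j}\propto p_{N,j+r_1}$. Your observation at the end correctly identifies why the direct $O(1/N)$ estimate from Theorem \ref{unifapprox} applied to $f_1$ alone cannot settle the sign of $\gamma_{N,k}-f_1(a)$ for bounded $k$, which is precisely the difficulty both arguments must circumvent. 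The remaining claims (the $s_1+1$ coincident nodes at $a$, via $s_1=r_1-1$; the strict monotonicity of the interior nodes when $f_1'>0$ on $(a,b)$, via the factorization $f_1'=(x-a)^{s_1}(b-x)^{s_2}h$ with $h>0$ on $[a,b]$ together with Lemma \ref{lincreasing}; the failure of monotonicity via Corollary \ref{increasing}) are handled exactly as the paper does, or by equivalent routine steps. One net gain of your approach is that it delivers slightly more than stated: the coordinates $\gamma_{N,k}$ for $k\ge r_1$ are \emph{strictly} greater than $f_1(a)$, and symmetrically at $b$, with no extra work.
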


Let us recall the well known relationship
between the Bernstein bases and the monomial or power bases. 

\begin{proposition} \label{coordinates}
Let $g:\left[ a,b\right] \rightarrow \mathbb{R}$ be a polynomial of degree 
$
m $ and let $n\geq m$ be a natural number. Write 
\begin{equation*}
g\left( x\right) =\sum_{l=0}^{m}c_{l}\left( x-a\right)^{l}
 =\sum_{k=0}^{n}w_{n,k}p_{n,k}\left( x\right).
\end{equation*}
Then the Bernstein coefficients $w_{n,k}$ of $g$ are given by 
\begin{equation} \label{Berncoor}
w_{n,k}=\sum_{l=0}^{\min \left( k,m\right) }c_{l}
\frac{k!\left( n-l\right) !}{ n!  \left( k-l\right) !}  \left( b-a\right)^{l}.
\end{equation}
\end{proposition}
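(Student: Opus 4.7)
The plan is to exploit linearity of the change-of-basis map and reduce the problem to the computation of the Bernstein coordinates of a single monomial $(x-a)^{l}$ in $\mathbb{P}_{n}[a,b]$. Once I have an explicit formula
$$
(x-a)^{l} = \sum_{k=0}^{n} a_{n,k}^{(l)}\, p_{n,k}(x),
$$
the proposition follows immediately by writing $g(x) = \sum_{l=0}^{m} c_{l}(x-a)^{l}$ and summing the contributions, with $a_{n,k}^{(l)}$ vanishing for $k < l$, which is precisely what produces the cut-off $l \le \min(k,m)$ in the outer index.

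To compute $a_{n,k}^{(l)}$, I would start from the identity $(b-a)^{n-l} = \bigl((x-a) + (b-x)\bigr)^{n-l}$ and apply the binomial theorem, multiplying through by $(x-a)^{l}$:
$$
(x-a)^{l}(b-a)^{n-l} \;=\; \sum_{j=0}^{n-l} \binom{n-l}{j}(x-a)^{l+j}(b-x)^{n-l-j}.
$$
After the substitution $k = l+j$, each summand has the shape $(x-a)^{k}(b-x)^{n-k}$, which by the definition \eqref{standard} equals $\binom{n}{k}^{-1}(b-a)^{n}\, p_{n,k}(x)$. Therefore
$$
(x-a)^{l} \;=\; \sum_{k=l}^{n} \frac{\binom{n-l}{k-l}}{\binom{n}{k}}\,(b-a)^{l}\, p_{n,k}(x),
$$
so $a_{n,k}^{(l)} = 0$ for $k<l$ and $a_{n,k}^{(l)} = \binom{n-l}{k-l}\binom{n}{k}^{-1}(b-a)^{l}$ for $k \ge l$.

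It remains to simplify the binomial quotient. A short calculation gives
$$
\frac{\binom{n-l}{k-l}}{\binom{n}{k}} \;=\; \frac{(n-l)!\, k!\,(n-k)!}{(k-l)!\,(n-k)!\, n!} \;=\; \frac{k!\,(n-l)!}{n!\,(k-l)!},
$$
matching the coefficient displayed in \eqref{Berncoor}. Combining with linearity yields the stated formula.

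There is no substantive obstacle here; the whole argument is a direct binomial-theorem calculation. The only thing to keep an eye on is the index range (ensuring that for $k<l$ the monomial $(x-a)^{l}$ contributes nothing to $w_{n,k}$, which is how $\min(k,m)$ appears) and the correct cancellation of $(n-k)!$ in the binomial ratio. Everything else is mechanical.
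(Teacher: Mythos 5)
Your proof is correct, and it is essentially the paper's argument: the identity $1=\sum_{k=0}^{n-l}p_{n-l,k}$ that the paper multiplies $g$ by is precisely your binomial expansion of $(b-a)^{n-l}=((x-a)+(b-x))^{n-l}$, and the degree-elevation step $(x-a)^{l}p_{n-l,k-l}=\frac{k!(n-l)!}{(k-l)!n!}(b-a)^{l}p_{n,k}$ is the same coefficient computation you carry out. The only cosmetic difference is that you expand each monomial $(x-a)^{l}$ separately and then sum by linearity, whereas the paper expands $g\cdot 1$ in a single display and swaps the order of summation at the end.
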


\begin{proof}
Since $1=\sum_{k=0}^{n-l}p_{n-l,k}\left( x\right) $,
\begin{equation*}
g\left( x\right) \cdot 1=\sum_{l=0}^{m} c_{l}  \left( x-a\right)^{l} \sum_{k=0}^{n-l}p_{n-l,k}\left( x\right)
=
\sum_{l=0}^{m}\sum_{k=l}^{n}c_{l}  \left( x-a\right)^{l} p_{n-l,k-l}\left(x\right).
\end{equation*}
Now
\begin{equation}
 \left( x-a\right)^{l} p_{n-l,k-l}\left( x\right) 
= 
\binom{n-l}{k-l} \frac {\left( x-a\right)^{k}\left(b -x\right) ^{n-k}}
{ \left( b-a\right)^{n -l}}
=
\frac{k!\left( n-l\right) !}{\left( k-l\right) !n!}
 \  \left( b-a\right)^{l}  p_{n,k}\left( x\right), 
  \label{eqdegree}
\end{equation}%
so
\begin{eqnarray*}
g\left( x\right) &=&\sum_{l=0}^{m}\sum_{k=l}^{n}c_{l}\frac{k!\left(
n-l\right) !}{\left( k-l\right) !n!} \  \left( b-a\right)^{l}  p_{n,k}\left( x\right)\\
&=&
\sum_{k=0}^{n} p_{n,k}\left( x\right) \sum_{l=0}^{\min \left( k,m\right)
}c_{l}\frac{k!\left( n-l\right) !}{ n!  \left( k-l\right) ! } \  \left( b-a\right)^{l}.
\end{eqnarray*}
\end{proof}

How large must $N$ be so that $B_{N}^{f_1}$ is well defined cannot be determined
from the degree of $f_1$ alone, it also depends on the coefficients $c_l$.

\begin{theorem} \label{degree3}   Set $f_{1} (x) = ( x - N^{-1})^3$ on $[0,1]$.	
Then the node $t_{N, 2} < 0$, so $B_{N}^{f_1}$ is not well defined.
\end{theorem}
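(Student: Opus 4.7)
The plan is to compute $\gamma_{N,2}$ explicitly via Proposition \ref{coordinates} and to show that $\gamma_{N,2} < f_1(0) = -1/N^3$; since $f_1^\prime(x) = 3(x - 1/N)^2 \ge 0$ on $\mathbb{R}$ with only one zero, $f_1$ is strictly increasing and has an inverse defined on all of $\mathbb{R}$, so this inequality transfers directly to $t_{N,2} = f_1^{-1}(\gamma_{N,2}) < f_1^{-1}(f_1(0)) = 0$.

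Expanding $f_1(x) = (x - 1/N)^3$ in powers of $x - a = x$ (with $a = 0$, $b = 1$) gives the coefficients $c_0 = -1/N^3$, $c_1 = 3/N^2$, $c_2 = -3/N$, $c_3 = 1$. Substituting into formula (\ref{Berncoor}) for $k = 2$ yields
\begin{equation*}
\gamma_{N,2} = c_0 + c_1\,\frac{2}{N} + c_2\,\frac{2}{N(N-1)} = -\frac{1}{N^3} + \frac{6}{N^3} - \frac{6}{N^2(N-1)}.
\end{equation*}
Putting this over the common denominator $N^3(N-1)$ gives
\begin{equation*}
\gamma_{N,2} = \frac{5(N-1) - 6N}{N^3(N-1)} = -\frac{N+5}{N^3(N-1)}.
\end{equation*}

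Since $f_1(0) = -1/N^3 = -(N-1)/[N^3(N-1)]$ and $N+5 > N-1$ for every positive integer $N$, we conclude $\gamma_{N,2} < f_1(0)$, so $t_{N,2} = f_1^{-1}(\gamma_{N,2}) < 0$. Hence the prospective node $t_{N,2}$ falls outside $[0,1]$, and consequently no generalized Bernstein operator $B_N^{f_1}$ fixing $\mathbf{1}$ and $f_1$ on $\mathbb{P}_N[0,1]$ exists (by the remarks preceding equation (\ref{one}), such an operator, if it exists, is uniquely determined and must have $t_{N,k} = f_1^{-1}(\gamma_{N,k})$).

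There is no real obstacle: the computation is routine once Proposition \ref{coordinates} is in hand, and the only thing to notice is the book-keeping that makes the small positive shift $1/N$ of the inflection point produce a Bernstein coordinate $\gamma_{N,2}$ which is slightly more negative than $f_1(0)$, thereby pushing the second node off the interval.
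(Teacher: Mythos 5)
Your proof is correct and follows essentially the same route as the paper: both compute $\gamma_{N,2}$ from Proposition \ref{coordinates} and conclude $t_{N,2}<0$ by comparing against $f_1(0)=-1/N^3$; the only difference is cosmetic (you clear denominators to get $-\frac{N+5}{N^3(N-1)}$, while the paper observes $\frac{6}{N^3-N^2}>\frac{6}{N^3}$ and appeals to the monotonicity of $f_1^{-1}(y)=1/N+y^{1/3}$).
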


\begin{proof} Since
	$$
	f_1(x) = 
	\left( x-\frac{1}{N}\right) ^{3}
	=
	-\frac{1}{N^3}  + \frac{3 x }{N^2 }-\frac{3 x^{2}}{N } + x^3
	$$
	on  $\left[ 0,1\right]$,  from Proposition \ref{coordinates} below 
	we know that the Bernstein 
	coordinate $\gamma_{N,2}$ is given by 
		\begin{equation}
	\gamma_{N,2}=\sum_{l=0}^{2}c_{l}\frac{2!\left( N-l\right) !
	}{N!\left( 2-l\right) !}
	=
	-\frac{1}{N^3}+\frac{6}{N^3}-\frac{6}{N^2 \left(N -1\right) } 
	=  \frac{5}{N^3}-\frac{6}{N^3  - N^2}.
	\end{equation}
	Now  $f_1^{-1} (y) = 1/N + y^{1/3} $ is increasing, 
	so 
	$$
	t_{N,2} = f_1^{-1} \left(\frac{5}{N^3}-\frac{6}{N^3  - N^2} \right)  
	<  
	f_1^{-1} \left(\frac{5}{N^3}-\frac{6}{N^3} \right) = 0. 
	$$
\end{proof}

Next we show that for an arbitrary polynomial $g$, restricted to the interval $[a,b]$,  we have 
$\left\vert g\left( a+\frac{k}{n}\left( b-a\right) \right)
-w_{n,k}\right\vert  
=  
O (1/n)$, where the  constant 
implicit in the  order notation depends only
on $g$ and on $[a,b]$, but not on $n$ or $k$. In particular, the following proof entails
that if $g$ is affine, then $ g\left( a+\frac{k}{n}\left( b-a\right) \right)
 = w_{n,k}$, that is, we obtain the well known fact that
 the standard Bernstein operators fix the affine functions. So the result says
 something new only  when $m = \deg (g) \ge 2$.

Write $e_j (x) := x^j$,  and
let $n \gg  1 $ be even, say $n = 2k$. We know from Proposition \ref{coordinates}
that for $e_2(x)$ on $[0,1]$, $
w_{n,k}
=
\frac{k\left( k - 1 \right) }{ n \left( n- 1 \right)  }
$, so 
$|e_2(k/n) - w_{n,k} | = (4 n - 4 )^{ - 1 }$.
 Thus,  the estimate 
$|g (k/n) - w_{n,k} |  =  O (1/n)$ cannot in general be improved.

We use the standard conventions whereby the value of an empty sum
is 0, and the value of an empty product, 1.

\begin{theorem} \label{unifapprox}
Let $g\left( x\right) =\sum_{l=0}^{m}c_{l}  (x - a)^{l}$ be a polynomial of degree 
$m $, restricted to the interval $[a,b]$. Define $c_{\operatorname{max}} :=   \max_{l=2,\dots ,m}\left\vert c_{l}\right\vert$.
 For $n > m$  and $k=0,\dots , n$, let   $w_{n,k}$ be the Bernstein coefficients of 
$g$ in $\mathbb{P}_n [a,b]$, so 
\begin{equation*}
	g\left( x\right) =\sum_{k=0}^{n}w_{n,k}\binom{n}{k}\frac{\left( x-a\right)
		^{k}\left( b-x\right) ^{n-k}}{\left( b-a\right) ^{n}}.
\end{equation*}
Then 
\begin{equation} \label{images}
\left\vert g\left( a+\frac{k}{n}\left( b-a\right) \right)
-w_{n,k}\right\vert
\leq 
\frac{m^{3} c_{\operatorname{max}} \max\{  \left(b-a\right) ^{2},  \left(b-a\right) ^{m}\} }{n}.
\end{equation}
\end{theorem}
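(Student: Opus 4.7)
The plan is to compare the two explicit formulas term by term in $l$. By Proposition \ref{coordinates},
$$
w_{n,k}=\sum_{l=0}^{\min(k,m)} c_l (b-a)^l \, \frac{k!(n-l)!}{n!(k-l)!}=\sum_{l=0}^{m} c_l (b-a)^l \prod_{j=0}^{l-1}\frac{k-j}{n-j},
$$
where the last equality holds because when $l>k$ one of the factors vanishes. On the other hand,
$$
g\!\left(a+\tfrac{k}{n}(b-a)\right)=\sum_{l=0}^{m} c_l (b-a)^l \left(\tfrac{k}{n}\right)^{l}.
$$
Subtracting, the $l=0$ and $l=1$ terms cancel, and I am reduced to bounding $|A_l-B_l|$ for $2\le l\le m$, where $A_l=(k/n)^{l}$ and $B_l=\prod_{j=0}^{l-1}(k-j)/(n-j)$.

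Next I would split into two cases. When $k\ge l$, every factor of $B_l$ lies in $[0,1]$, so I can use the standard telescoping
$$
A_l-B_l=\sum_{i=0}^{l-1}\!\left(\tfrac{k}{n}\right)^{l-i-1}\!\!\prod_{j=0}^{i-1}\!\tfrac{k-j}{n-j}\left[\tfrac{k}{n}-\tfrac{k-i}{n-i}\right],
$$
together with the identity $\tfrac{k}{n}-\tfrac{k-i}{n-i}=\tfrac{i(n-k)}{n(n-i)}$. Since $i\le l-1\le k-1$ gives $(n-k)/(n-i)\le 1$, each bracketed difference is at most $i/n$, and the telescoped sum is bounded by $l(l-1)/(2n)\le m^{2}/n$. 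When $k<l$ the product collapses to $B_l=0$, and $|A_l-B_l|=(k/n)^{l}\le k/n\le (m-1)/n\le m^{2}/n$. So in either case
$$
|A_l-B_l|\le \frac{m^{2}}{n}\qquad (2\le l\le m).
$$

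Finally, summing over $l$ and using $|c_l|\le c_{\max}$ together with the obvious bound $(b-a)^l\le \max\{(b-a)^{2},(b-a)^{m}\}$ valid for $2\le l\le m$ (monotonicity of $t\mapsto (b-a)^{t}$ in each case $b-a\le 1$ or $b-a\ge 1$), I get
$$
\left|g\!\left(a+\tfrac{k}{n}(b-a)\right)-w_{n,k}\right|\le (m-1)\, c_{\max}\max\{(b-a)^{2},(b-a)^{m}\}\cdot \frac{m^{2}}{n},
$$
which is the claimed bound since $m-1\le m$. The main bookkeeping obstacle is precisely the uniformity in $k$: the telescoping estimate breaks down when factors $(k-j)/(n-j)$ go negative, so the short separate argument for $k<l$ is what lets one avoid assuming $n$ is much larger than $m$ and work with the hypothesis $n>m$ alone.
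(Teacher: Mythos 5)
Your proof is correct and gives the same bound, but the technical route is genuinely different from the paper's, and somewhat cleaner. Both proofs start identically from Proposition \ref{coordinates} and the cancellation of the $l=0,1$ terms, and both must bound $|A_l - B_l|$ with $A_l = (k/n)^l$, $B_l = \prod_{j=0}^{l-1}(k-j)/(n-j)$. The paper splits at $k \le m$ versus $k \ge m$: for small $k$ it discards $B_l$ entirely, bounds $A_l$ by a geometric series (getting the slightly sharper $O(1/n^2)$ noted in the Remark that follows), and for $k \ge m$ it first replaces $B_l$ by the smaller $\bigl((k-m)/(n-m)\bigr)^l$ and then applies the mean value theorem to $x^l$. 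You instead split at $k < l$ versus $k \ge l$, observe that $B_l = 0$ when $k < l$ (which makes that case trivial), and for $k \ge l$ telescope $A_l - B_l$ directly without detouring through a lower bound for $B_l$; the identity $\frac{k}{n} - \frac{k-i}{n-i} = \frac{i(n-k)}{n(n-i)}$ plays the role that the MVT plays in the paper. Your case split is tighter and the telescoping is discrete and elementary, at the cost of losing the improved $O(1/n^2)$ estimate near $k=0$ that the paper extracts as a side benefit. One cosmetic point: your final constant is $(m-1)m^2$, so the stated $m^3$ is recovered via $m-1 \le m$, exactly as in the paper's summation step.
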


\begin{proof}  By (\ref{Berncoor})
\begin{equation}
g\left( a+\frac{k}{n}\left( b-a\right) \right) - w_{n,k}
\label{eqwww}
\end{equation}
\begin{equation}
=
\sum_{l=0}^{\min \left( k,m\right) }
c_{l} \left( \left( \frac{k}{n}\right)^{l} - 
\frac{k!\left( n-l\right) !}{ n!  \left( k-l\right) ! }\right) 
\left(b-a\right)^{l}
+
\sum_{l=k + 1}^{m} c_{l}\left(\frac{k}{n}\right) ^{l} \left(b-a\right)^{l}.
\label{eqwwwb}
\end{equation}
Since for $l = 0, 1$, $\left( \frac{k}{n}\right) ^{l} - 
\frac{k!\left( n-l\right) !}{ n!  \left( k-l\right) ! }\ = 0$, we can start the sums with $2 = l \le  m$ (in particular, if $m < 2$, then $g\left( a+\frac{k}{n}\left( b-a\right) \right) = w_{n,k}$).
 Thus 
\begin{equation*}
\left| g\left(  a+\frac{k}{n}\left( b-a\right)  \right) - w_{n,k}\right|
\end{equation*}
\begin{equation*}
=
\left|\sum_{l=2}^{\min \left( k,m\right) }
c_{l} \left( \left( \frac{k}{n}\right) ^{l}-\frac{k\left(
	k-1\right) \cdots \left( k-\left( l-1\right) \right) }{n\left( n-1\right)
	\cdots \left( n-\left( l-1\right) \right) }\right) \left(b-a\right)^{l}
+
\sum_{l=k + 1}^{m} c_{l}\left(\frac{k}{n}\right) ^{l} \left(b-a\right)^{l}\right|.
\end{equation*}

Note  that for every $0 \le s < n$, $\frac{k}{n}\geq \frac{k-s}{n-s}$, as can be checked just by 
simplifying. Furthermore, if $b - a \ge 1$, then for  $2 \le l \le  m$ we have $\left(b-a\right)^{l} \le \left(b-a\right)^{m}$, while  if $b - a \le 1$, then for  $2 \le l \le  m$ we have $\left(b-a\right)^{l} \le \left(b-a\right)^{2}$.
Write $M :=  \max\{  \left(b-a\right) ^{2},  \left(b-a\right) ^{m}\}$. Let us 
consider first the case $k \le m$. Then 
\begin{equation}
\left| g\left(  a + \frac{k}{n}\left( b-a\right) \right) - w_{n,k}\right|
\label{eqwww1}
\end{equation}
\begin{equation}
\le
\sum_{l=2}^{m }
|c_{l}| \left( \frac{k}{n}\right) ^{l}  \left(b-a\right)^{l}
\le
 Mc_{\operatorname{max}}  \left( \frac{m}{n}\right)^2
\sum_{l=0}^{\infty } \left( \frac{m}{n}\right)^l
\le
M \left( \frac{n}{n - m}\right) \frac{m^{2} c_{\operatorname{max}} }{n^2}.
\label{eqwww1b}
\end{equation}

Next, suppose $k \ge m$. In this case,
\begin{equation}
	\left| g\left(  a+\frac{k}{n}\left( b-a\right)  \right) - w_{n,k}\right|
	\le
	\sum_{l=2}^{ m }
	\left| c_{l} \right| \left( \left( \frac{k}{n}\right) ^{l}-\frac{k\left(
		k-1\right) \cdots \left( k-\left( l-1\right) \right) }{n\left( n-1\right)
		\cdots\left( n-\left( l-1\right) \right) }\right)  \left(b-a\right)^{l}
	\label{eqref}
\end{equation}
\begin{equation} 	\label{eqref1}
\le 
M c_{\operatorname{max}} \ 	\sum_{l=2}^{m }  
	\left( \left( \frac{k}{n}\right) ^{l}-
	\left( \frac{k - m}{n - m}\right) ^{l}\right).
	\end{equation}
	Using the mean value theorem for the function $h_{l}\left( x\right) =x^{l}$
	and the interval $\left[ \frac{k- m }{n- m },
	\frac{k}{n}\right] $, we have 
	\begin{equation} 	\label{eqref2}
		\left( \frac{k}{n}\right) ^{l}-\left( \frac{k-m }{n- m}\right) ^{l} 
	 \leq 
	 l \max_{\xi \in \left[ 
			\frac{k-m}{n-m},\frac{k}{n}\right] }\xi ^{l-1}
		\left( \frac{k}{n}-\frac{
			k- m }{n- m }\right)
			\end{equation}
			\begin{equation} 	\label{eqwww3}	
			\leq 
		m \left( \frac{k}{n}\right) ^{l-1}
		\frac{m \left( n-k\right) }{n\left( n- m\right) } 
		\leq 
		\frac{m^2}{n}.
	\end{equation}
	It follows that 
	\begin{equation} 	\label{eqwww2}	
	\left| g\left( a+\frac{k}{n}\left( b-a\right)  \right) - w_{n,k}\right|
\le
M 	c_{\operatorname{max}} \ 	\sum_{l=2}^{m }
	\frac{m^2}{n}
\le
	M	c_{\operatorname{max}} \   \frac{m^3}{n}.
\end{equation}
Since the estimate from (\ref{eqwww2}) is always larger than the estimate
from (\ref{eqwww1})-(\ref{eqwww1b}),
the result follows.
\end{proof}

\begin{remark}  The uniform (in $k$) approximation obtained in the preceding 
	Theorem depends on
	the degree of the polynomial, its coefficients, and the length of
	the interval, cf. (\ref{images}). The dependency on the last factor is easily explained:
	the smaller $b - a$ is, the larger the ``sampling rate" of the
	polynomial, and viceversa.
	
Also, from the preceding proof we see
 that near $0$ (say, for $0\le k \le m$) we have
$$
\left| g\left( a+\frac{k}{n}\left( b-a\right)  \right) - w_{n,k}\right|
=
O
\left( \frac{1}{n^2}\right), 
$$
 which of course is better than the general 	$
\left| g\left( a+\frac{k}{n}\left( b-a\right)  \right) - w_{n,k}\right|
=
O
\left( \frac{1}{n}\right). 
$ 
We can also get 
$
\left| g\left( a+\frac{k}{n}\left( b-a\right)  \right) - w_{n,k}\right|
=
O
\left( \frac{1}{n^2}\right)
$ at the other endpoint, for $n - m \le k \le n$, just by using 
$
\frac{m \left( n-k\right) }{n\left( n- m\right) } 
\leq 
\frac{m^2}{n(n - m)}
$
in 	(\ref{eqwww3}).

\end{remark}

The basic idea of the following proof is that the reversal of the nodes can only 
happen ``far away" from the endpoints, and then  by Theorem \ref{unifapprox}, 
for sufficiently large $N$, if $k/N$
is far away  from the endpoints of $[0,1]$, we must  have  $f_1 (a) < \gamma_{N,k} <  f_1 (b)$.

\vskip .3 cm

{\em Proof of Theorem \ref{Thm3}.} 
We assume, for simplicity in the expressions, that
$[a,b] = [0,1]$. There is no loss of generality in doing so, since
the increasing affine change of variables that maps $0$ to $a$ and
$1$ to $b$ preserves the non-negativity (resp. positivity) of $f_1^\prime$ on the 
closed interval (resp. on the open or on the closed interval). 

 Suppose  that $f_1^\prime$ has at least
one zero in $(0,1)$.  The case where $f_1^\prime > 0$ on $(0,1)$
is similar but simpler, since there are no reversals in the ordering of the nodes
(cf. Corollary \ref{increasing}).

 Let $m \ge 2$
be the degree of $f_1$.
 We need to show
that for $N$ sufficiently large, all nodes $t_{N,k}$  belong to $[0,1]$.

Suppose that $f_1^\prime$ 
has a zero of order $s_1 \ge 0$ 
at $0$, and a zero of order $s_2 \ge 0$
at $1$; let us write $f_1^\prime (x) = x^{s_1} g(x) ( 1- x)^{s_2}$,
with   $g = \sum_{k=0}^{n}
\widetilde{w}_{n,k}p_{n,k}$  for 
$n \ge m -1$. Thus
\begin{equation*}
f_1^\prime \left( x\right) 
=
\sum_{k=0}^{n}\widetilde{w}_{n,k} x^{s_{1}} p_{n,k} (x)
 \left( 1-x\right) ^{s_{2}}
\end{equation*}
\begin{equation*}
= 
\sum_{k=0}^{n}\widetilde{w}_{n,k} 
\frac{\binom{n}{k}}{\binom{n + s_1 + s_2}{k + s_1}} p_{n + s_1 + s_2 , k  + s_1} (x)
= 
\sum_{k=0}^{n + s_1 + s_2} {w}_{n + s_1 + s_2,k} p_{n + s_1 + s_2 ,k} (x).
\end{equation*}
Equating coefficients, we find that for $0 \le k \le s_1 - 1$ and for $n + s_1 +1 \le k \le n + s_1 + s_2$,
$ {w}_{n + s_1 + s_2,k} = 0$, while for $s_1 \le k \le n + s_1$, $\widetilde{w}_{n,k - s_1}$ and 
${w}_{n + s_1 + s_2 ,k}$
have the same sign.

Let 
$x_0, x_l \in (0,1)$ be the first and the last zeros of $f_1^\prime$ in $(0,1)$.
Since $f_1$ is increasing, we can select a  $\delta > 0$ 
 such that $f_1(0) <  f_1(x_0/2) - \delta <   f_1((x_l + 1)/2) + \delta
 < f_1(1)$. We  choose  $n_1$ such that
 $\frac{m^{3}	c_{\operatorname{max}} }{n_1} < \delta$. By 
 Theorem \ref{unifapprox}, applied to $f_1$, whenever $N \ge n_1$ and $k/N \le 
 (1 + x_l)/2$, we have $\gamma_{N,k} < f_1(1)$ and hence $t_{N,k} < 1$.
 Likewise, whenever $k/N \ge 
 x_0/2$, we have $\gamma_{N,k} > f_1(0)$ and $t_{N,k} > 0$. Thus, if
 $x_0/ 2 \le k/N \le (x_l + 1)/2$, then $t_{N,k} \in  (0,1)$.
 
 Using   Theorem \ref{unifapprox} again, this time applied to $g$, 
 we can select an even larger $N$ so that for every $k$ 
 with either $0 \le k/N \le 3x_0/4$ or $ (3 x_l + 1)/4 \le k/N \le 1$,
the coordinates $\widetilde{w}_{N - s_1 - s_2 , k} $ of $g$ in dimension $N - s_1 - s_2$ are strictly positive, and thus, the coordinates of $f_1^\prime$ in dimension  $N$ 
 satisfy  $w_{N,k} \ge 0$. 
From Lemma \ref{lincreasing} it follows that   $\gamma_{N,k} \le \gamma_{N,k + 1}$
for every such $k$. 
Since
$f_{1}\left( 0\right) = \gamma _{N,0}  \ p_{N,0} \left(  0 \right) =  \gamma _{N,0}$,
we have, for $0 \le k/N  \le 3x_0/4$, that the nodes $t_{N, k}
= f_{1}^{-1}\left( \gamma _{N,k} \right) 
\ge t_{N,0} = 0$. Likewise,  for $ (3 x_l + 1)/4 \le k/N \le 1$  we have $t_{N, k} \le t_{N,N} = 1$.

 Regarding the statements about the nodes, if $f_{1}^{\prime }  >0$ 
on
$\left[a,b \right]$, then for $N$ sufficiently large and
$k= 0, \dots , N$, the Bernstein coordinates
$w_{N, k}$ of $f_{1}^{\prime }$ satisfy  $w_{N, k} > 0$, 
so by Lemma \ref{lincreasing} or Theorem \ref{Thm1}, the coordinates $\gamma_{N, k}$ 
of $f_1$ form a strictly increasing sequence, and hence, so do
the nodes (their inverse image under $f_1$). Likewise, 
 by Lemma \ref{lincreasing},  $\gamma_{N, 0} = \cdots = \gamma_{N, s_1}$ when
 $f_1^\prime$ 
 has a zero of order $s_1$ 
 at $0$. The statements for the cases where $f_1^\prime$ 
 has a zero of order  $s_2$ at $1$, and where $f_{1}^{\prime }  >0$ 
 on
 $\left(a,b \right)$, follow in the same manner. Finally, the fact
 that nodes must decrease at some point if  $f_{1}^{\prime }$ vanishes
 somewhere inside $(0,1)$, is immediate from Corollary \ref{increasing}.
\qed

\section{Convergence to the identity.}

Next we show that the  generalized Bernstein  operators fixing constants and an increasing polynomial $f_1$,  denoted in this section by  $B_n^{f_1}$,  
converge to the identity in the strong operator topology, as $n\to \infty$.

The proof  proceeds 
as follows: We deduce the convergence of the operators  $B_n^{f_1}$ to the identity from the convergence
of the standard Bernstein operators $B_n$, by showing that the nodes 
$t_{n,k}$ of $B_n^{f_1}$
are ``close" to the corresponding nodes $a+\frac{k}{n}\left( b-a \right)$  of $B_n$, which follows from  the
fact   that
for $0 \le k \le n$, $f_1(t_{n,k})$ is ```close" to $f_1\left(a+\frac{k}{n}\left( b-a\right)\right)  $
(Theorem \ref{unifapprox}). 

We consider first the case $f_1^\prime > 0$ inside $(a,b)$, where estimates
are better for the following reason:  the zeros of $f_1^\prime$ inside 
$(a,b)$ generate reversals of the order
of the nodes. This back and forth movement entails that the average
distance between consecutive  nodes will be larger than $(b - a)/n$. And the
distance between a concrete pair of nodes can be much larger than $(b-a)/n$,
as the next example shows. However, when 
$f_1^\prime > 0$ inside $(a,b)$ there are no order reversals; thus, 
the average distance between consecutive nodes 
is still $\left( b-a\right)/n$. We show that
the distance between every pair of consecutive nodes is bounded by
$O(1/n)$, so in fact it is never much larger than its average
value. This has the obvious consequence that $B_n^{f_1} f (x)$ and 
$B_n f (x)$ are always at distance $O(\omega (f, 1/n))$.

\begin{example} \label{far} We use the same example $(x - t)^3$ as in Theorem \ref{degree3}, 
but taking $t = 1/2$ instead of $t = 1/N$.  Consider 
	$$
	f_1(x) = 
	\left( x-\frac{1}{2}\right) ^{3}
	=
	x^{3}-\frac{3}{2}x^{2}+\frac{3}{4}x-\frac{1}{8}
	$$
	on  $\left[ 0,1\right].$ Since the largest order $s-1$ of a zero of
	$f_1^\prime$ is 2, we have  $s = m = 3$.
	
	From Proposition \ref{coordinates} we know that for $n \ge k \ge m$, the Bernstein 
	coefficients of  a
	polynomial $g$  of degree $m$,  $g:\left[ 0,1\right] \rightarrow \mathbb{R}$,  
	\[
	g\left( x\right)
	=\sum_{l=0}^{m}c_{l}x^{l}=\sum_{k=0}^{n}w_{n,k}p_{n,k}\left( x\right),
	\]
	are given by 
	\begin{equation}
	w_{n,k}=\sum_{l=0}^{m} c_{l}\frac{k!\left( n-l\right) !
	}{n!\left( k-l\right) !}.  \label{Bernstcoor}
	\end{equation}
	In the specific instance $
	f_1(x) = 
	\left( x-\frac{1}{2}\right) ^{3},
	$ for $k\ge 3$ the coordinates $\gamma_{n,k}$ are 
	\begin{equation*}
	\gamma_{n,k} 
	=
	-\frac{1}{8}+\frac{3k}{4n}-\frac{3k\left( k-1\right) }{2n\left(n-1\right) }+\frac{k\left( k-1\right) \left( k-2\right) }{n\left( n-1\right)
		\left( n-2\right) }.
	\end{equation*}
	Let $K > 0$ be any fixed constant, and let 
	$N \gg  16^{  2} K^{3}$. When  $n=2N$ and $k=N$, we have 
	$\gamma_{2N,N}=0$, while if
	$n=2N$ and $k=N + 1$, a computation shows that  $\gamma_{2N,N+1} 
	=
	-\frac{3}{16N^{2}-8N} < 0$.
	Since $t_{n,k} =  
	f_1^{ -1 }(\gamma_{n,k})$ and  $f_1^{ -1 }(t)
	= 1/2 + t^{1/3}$, 
	we see that $t_{2N, N} = 1/2$,   $t_{2N, N + 1} = 1/2 - \left(\frac{3}{16N^{2}-8N} \right)^{1/3}$
	and 
	\[
	|t_{2N,N+1}-t_{2N,N}| =   \left(\frac{3}{16N^{2}-8N} \right)^{1/3} \ge (16 N)^{-2/3}
	\gg K N^{-1}.
	\]
	
	Also, the distance between $k/n$ and $t_{n,k}$ can be much larger than $ K n^{-1}$.
	Choose $N \gg 3 K^{2}$ so that both $N/2$ and $(N/3)^{1/2}$ are integers. A computation
	shows that 
	$\gamma_{N, N / 2 + (N/3)^{1/2}}  < 0$, so $t_{N, N / 2 + (N/3)^{1/2}}  < 1/2$ and 
	\[
	\left|t_{N, N / 2 + (N/3)^{1/2}} -  \frac{N + 2 (N/3)^{1/2}}{2 N}  \right| 
	\ge
	\frac{ (N/3)^{1/2}}{N}
	=
	\frac{ 1}{(3 N)^{1/2}}
		\gg \frac{ K}{N}.
	\]
\end{example}

\begin{theorem} \label{K/n} Let $f_{1}$ be a strictly increasing polynomial on $[a,b]$,
	of degree $m$,  such that $f_{1}^{\prime }\left( x\right) >0$ for all $x$ in the open
	interval $\left( a,b\right) .$ Then there exist an integer $n_0$ 
	and a constant $K > 0$ such that
	for all $n \ge n_0$,  and 
	all $0\leq k\leq n$,  we have   
	\begin{equation}  \label{small}
\left\vert a+\frac{k}{n}\left( b-a\right) -t_{n,k}\right\vert \leq
\frac{K}{n},
\end{equation}
	where $t_{n, k} = f_{1}^{-1}\left( \gamma_{n,k}\right)$, and the
	$\gamma_{n,k}$'s  are the Bernstein coordinates of $f_1$ in  $\mathbb{P}_{n}[a,b]$.
\end{theorem}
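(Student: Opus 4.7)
The plan is to reduce to $[a,b]=[0,1]$ by the standard affine change of variables and then apply the mean value theorem to $f_1$: for each $k$ choose $\xi_{n,k}$ between $u_{n,k}:=k/n$ and $t_{n,k}$ so that
\begin{equation*}
\bigl|u_{n,k} - t_{n,k}\bigr| \;=\; \frac{\bigl|f_1(u_{n,k}) - \gamma_{n,k}\bigr|}{f_1'(\xi_{n,k})}.
\end{equation*}
Theorem~\ref{unifapprox} already controls the numerator by $O(1/n)$, so the conclusion is immediate on any subinterval of $[0,1]$ where $f_1'$ stays bounded below by a positive constant. The whole difficulty comes from the possibility that $f_1'$ vanishes at one or both endpoints, since the hypothesis only requires $f_1'>0$ on $(0,1)$.

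Let $s_1$ (respectively $s_2$) be the order of the zero of $f_1'$ at $0$ (respectively $1$), with the convention that $s_i=0$ means $f_1'$ does not vanish there. For the boundary indices $k\le s_1$ and $k\ge n-s_2$, Theorem~\ref{Thm3} forces $t_{n,k}$ to equal $0$ or $1$, while $u_{n,k}$ is within $s_1/n$ or $s_2/n$ of that endpoint, giving the estimate at once. For bulk indices with $\delta\le k/n\le 1-\delta$ (where $\delta>0$ is a small constant depending only on $f_1$), $f_1'$ is bounded below by a positive constant on $[\delta/2,\,1-\delta/2]$; uniform continuity of $f_1^{-1}$ on the compact range of $f_1$ combined with Theorem~\ref{unifapprox} forces $t_{n,k}$ to lie in this enlarged interval once $n$ is large, so the mean value argument delivers $|u_{n,k}-t_{n,k}|=O(1/n)$ directly.

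The delicate regime is $s_1+1\le k<\delta n$ (the mirror image near $n$ being analogous via $x\mapsto 1-x$). I would first sharpen Theorem~\ref{unifapprox} by exploiting the Taylor expansion $f_1(x)=a_0+\sum_{l=s_1+1}^m a_l\,x^l$ about $0$, whose coefficients $a_1,\dots,a_{s_1}$ vanish by hypothesis. Rerunning the telescoping estimate in the proof of Theorem~\ref{unifapprox} only over $l\ge s_1+1$ yields
\begin{equation*}
\bigl|f_1(u_{n,k}) - \gamma_{n,k}\bigr| \;\le\; C\,\bigl(k/n\bigr)^{s_1}\cdot\frac{1}{n},
\end{equation*}
so the usual bound picks up an extra factor $(k/n)^{s_1}$ that exactly matches the decay of $f_1'$. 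Writing $f_1'(x)=x^{s_1}(1-x)^{s_2}h(x)$ with $h>0$ on $[0,1]$, we have $f_1'(\xi)\ge c\,\xi^{s_1}$ on $[0,1/2]$, so the proof reduces to a lower bound $\xi_{n,k}\ge c'\,k/n$, which in turn follows from $t_{n,k}\ge c'\,k/n$ since $u_{n,k}=k/n$.

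Proving $t_{n,k}\ge c'\,k/n$ is the step I expect to be the main obstacle, and I would handle it by a bootstrap using the refined numerator bound above. For $k/n$ smaller than a constant depending only on $f_1$, the expansion gives $f_1(u_{n,k})-f_1(0)\ge (a_{s_1+1}/2)(k/n)^{s_1+1}$ (recall $a_{s_1+1}>0$), and combining this with the refined estimate produces
\begin{equation*}
\gamma_{n,k} - f_1(0) \;\ge\; \left(\frac{k}{n}\right)^{\!s_1}\!\left[\frac{a_{s_1+1}}{2}\cdot\frac{k}{n} \,-\, \frac{C}{n}\right],
\end{equation*}
whose right-hand side exceeds $(a_{s_1+1}/4)(k/n)^{s_1+1}$ as soon as $k\ge k_0:=2C/a_{s_1+1}$. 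Inverting by means of the companion upper bound $f_1(x)-f_1(0)\le 2a_{s_1+1}\,x^{s_1+1}$ valid for small $x$ then yields $t_{n,k}\ge (1/8)^{1/(s_1+1)}(k/n)$, as required. The finitely many residual indices $s_1+1\le k<k_0$ cause no trouble because there $u_{n,k}$ is $O(1/n)$ and a direct application of the H\"older estimate for $f_1^{-1}$ shows $t_{n,k}$ is too. Combining all cases, together with the symmetric analysis near $k=n$, yields the desired $|u_{n,k}-t_{n,k}|\le K/n$.
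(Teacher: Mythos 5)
Your proposal is essentially correct, and it covers the same three-way decomposition of $[0,1]$ that the paper uses (boundary indices near the endpoints, a bulk middle region, and a delicate intermediate zone), as well as the same refined version of Theorem~\ref{unifapprox} in which the error gains a factor of $(k/n)^{s-1}$ (your $(k/n)^{s_1}$). Where you diverge is in the way the intermediate zone is handled. The paper fixes an integer $j$ depending only on $f_1$, applies the mean value theorem to $f_1$ on the fixed-width intervals $[k/n,(k+j)/n]$ and $[(k-j)/n,k/n]$, and shows directly that $f_1\left(\tfrac{k-j}{n}\right)\le\gamma_{n,k}\le f_1\left(\tfrac{k+j}{n}\right)$, so monotonicity of $f_1$ sandwiches $t_{n,k}$ in $\left[\tfrac{k-j}{n},\tfrac{k+j}{n}\right]$ with no need to locate $t_{n,k}$ beforehand. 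You instead apply the mean value theorem on the unknown interval between $k/n$ and $t_{n,k}$, and therefore must first bootstrap a multiplicative lower bound $t_{n,k}\ge c'\,k/n$ in order to control $f_1'(\xi_{n,k})$ from below. Both routes work; the paper's sandwich is arguably cleaner because it avoids the bootstrap and the need to first show $t_{n,k}$ lies in the small region where the local Taylor bounds apply (a step you leave implicit but which is needed before you may invoke $f_1(x)-f_1(0)\le 2a_{s_1+1}x^{s_1+1}$ at $x=t_{n,k}$; it follows from Theorem~\ref{unifapprox} and uniform continuity of $f_1^{-1}$, as in your bulk argument). Two small slips: the threshold should be $k_0=4C/a_{s_1+1}$, not $2C/a_{s_1+1}$; and the residual indices $k<k_0$ are handled more directly with the Taylor bounds near $0$ than by appealing to the H\"older continuity of $f_1^{-1}$, although that also works since Theorem~\ref{sHolder} is a self-contained statement about polynomials.
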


\begin{proof}
By a change of variables we  assume that $a=0$ and $b=1.$ 
This may alter the concrete value 
of $K$ in inequality (\ref{small}), since $b - a$ appears in (\ref{images}),
but it will not change the bound	$\left\vert a+\frac{k}{n}\left( b-a\right) -t_{n,k}\right\vert =
O\left(\frac{1}{n}\right).$
Choose $n \ge n_0 \gg m$ so large
	that all the Bernstein coordinates of  $f_{1}^{\prime }$ are non-negative 
	in all dimensions $n \ge n_0 - 1$. Additional conditions will be imposed on $n_0$
	later on. By Lemma \ref{lincreasing}  the coordinates of $f_{1}$, and hence the nodes,
	are non-decreasing for all $n \ge n_0$. We consider the case where $f_{1}^{\prime }$
	vanishes both at 0 and at 1. The other cases are simpler and can be handled in the
	same way. Suppose that  $f_{1}^{\prime }$
	has a
	zero of positive  order $s - 1$ at 0, so $s - 1> 0$ and $f_1 (x)  - f_1(0)= \sum_{l=s}^{m}c_{l}  x^{l}$. Then
	 $c_s > 0$ (since $f_1$ is increasing), and 
	$f_1^\prime (x) = \sum_{l=s}^{m}c_{l} l  x^{l -1} = s c_s x^{s - 1} + O(x^s)$. Thus, there exists a $\delta_0
	\in (0, 1/2)$ such that  for all 
	$y \in [0,  \delta_0)$, 
	\begin{equation} \label{der}
	\frac{c_{s} }{2} y^{s}
	\le 
	f_1 (y)  - f_1(0)
	\mbox{ \ \ \ and \ \ \ }
	\frac{s c_{s} }{2} y^{s - 1}
	\le 
	f_1^\prime (y).
	\end{equation} 
	Suppose next that  $f_1^\prime$ has a
	zero of order $r  - 1>  0$ at 1.  Recalling that $f$ is strictly increasing, we conclude that for $y < 1$, $f_1(y) - f_1(1) < 0$
	and $f^\prime_1 (y) >0$.  Thus, 
	there exists a $\delta_1
	\in (0, 1/2)$ such that  for all 
	$y \in (1 - \delta_1, 1]$, 
		\begin{equation} \label{der1}
	f_1 (y)   - f_1(1) =\sum_{l=r}^{m}c_{l}  (y - 1)^{l} =  c_r (y - 1)^{r} + O((y - 1)^{r + 1})
	\le	\frac{c_{r} }{2}( y  -1 )^{r} \le 0
	\end{equation} 
	and 
	\begin{equation} \label{der2}
	f_1^\prime (y) = \sum_{l=r}^{m}c_{l} l  (y - 1)^{l -1}
	 = 
	 r c_r (y - 1)^{r - 1} + O((y - 1)^r)
	 \ge
	 \frac{r c_{r} }{2}( y  -1 )^{r -1}
	\ge 0.
	\end{equation} 
	We write $[0,1]$ as the union of the three subintervals
	$[0, \delta_0/2)$, $[\delta_0/4 ,1 -  \delta_1/4]$ and $(1 - \delta_1/2 , 1]$;
	let 
	$$
	0 < c = \min_{x\in [\delta_0/4 , 1 - \delta_1/4]}  f_{1}^{\prime } (x).
	$$
	If both 
	$k/n$ and $t_{n,k}$ belong to  $[\delta_0/4,1 -  \delta_1/4]$,
	we use the fact that $f_1^{-1}$ is Lipschitz on the interval $f_1( [\delta_0/4, 1 - \delta_1/4])$, with constant 
	$1/c$,  to conclude
	that 
	$| k/n - t_{n,k} | 
	=
	O (n^{- 1})$. 
	
	Let  $n_0 \gg 2/( c \min \{\delta_0, \delta_1\})$.
Suppose either $k/n$ or $t_{n,k}$ belong to  $[0, \delta_0/2)$;  
	if either $k/n$ or $t_{n,k}$ belong to  $(1 - \delta_1/2, 1]$, 
	the argument is entirely analogous. 
	We must  have that both $k/n$ and $t_{n,k}$  belong to  $[0, \delta_0)$, 
	for otherwise   
	$$
	c \delta_0/2  
	\le
	 f_1 (\delta_0) - f_1 (\delta_0/2) 
	 \le
	 |f_1 (k/n) - \gamma_{n,k} |  = O (1/n)
	 $$
	 by Theorem \ref{unifapprox}, and  a
	  contradiction is obtained since $n \ge n_0$. 
	
	Let $k_0$ and $j$ be the smallest integers such that 
	\begin{equation} \label{kj}
	n \delta_0 \le 2 k_0 \mbox{ \ \ \ and \ \ \ }
	j \ge  2^s  (s c_s)^{-1} c_{\operatorname{max}} m^3
	\end{equation}
	 respectively, and assume that $k < k_0$, 
	 so   $k/n < \delta_0 /2$.	Note that $j > m$.
	First, since the nodes are increasing,  if $k\leq 2j$, then 
	\begin{equation} \label{klem}
	| t_{n,k} - k/n | \leq  t_{n,k} + k/n \leq  t_{n,2j}  +  2j/n
	\leq | t_{n,2j} -2j/n | + 4j/n.
	\end{equation}
	Thus, it  is enough to prove
	that for all $2j \le k < k_0$, 
	\begin{equation} \label{kgem}
	\frac{k - j}{n } \le t_{n,k}  \leq   \frac{k + j}{n },
	\end{equation}
	since this implies, whenever $0 \le k < k_0$, that
	\begin{equation} \label{klem}
	\left| t_{n,k} - \frac{k}{n} \right| \leq  
	\max \left\{ \frac{j}{n },   | t_{n,2j} -2j/n | + \frac{4 j}{n }\right\}
	\le \frac{5 j}{n }.
	\end{equation}
	So suppose $k \ge 2j > m$. Going back to formulas (\ref{eqref}) - (\ref{eqwww3}), the only difference
	here
	is that we have $s\le l \le m$ instead of $2\le l \le m$, 
	so using the bound $(k/n)^{l - 1} \le (k/n)^{s - 1}$ instead of 
	$(k/n)^{l - 1} \le 1$ in (\ref{eqwww3}), we obtain 
	the following refinement of (\ref{eqwww2}):
	\begin{equation}  \label{eqwww2r}
	\left| f_1\left( \frac{k}{n} \right) - \gamma_{n,k}\right|
	\le
	\left( \frac{k}{n} \right)^{s - 1} c_{\operatorname{max}} \ 	\sum_{l=s}^{m }
	\frac{m^2}{n}	
	\le
	\left( \frac{k}{n} \right)^{s - 1}  c_{\operatorname{max}} \   \frac{m^3}{n},
	\end{equation}
	or equivalently,
	\begin{equation}  \label{eqwww2r2}
	f_1\left( \frac{k}{n} \right) - 
	\left( \frac{k}{n} \right)^{s - 1} c_{\operatorname{max}} \  \frac{m^3}{n}
	\le
	\gamma_{n,k}
	\le
	f_1\left( \frac{k}{n} \right) +
	\left( \frac{k}{n} \right)^{s - 1}  c_{\operatorname{max}} \   \frac{m^3}{n}.
	\end{equation}
	Choose  $n_0$ so large that  $j/n_0 < \delta_0/8$.  Since $k/n \le \delta_0/2$, we have that $(k + j)/n < \delta_0$, so by (\ref{der}), on the interval $[k/n, (k + j)/n]$,
	$f_1^{\prime} (\xi) \ge  s c_s \xi^{s- 1}/2 \ge s c_s (k/n)^{s- 1}/2$.
	By the Mean Value Theorem, 
	
	\begin{equation}
	f_{1}\left( \frac{k+j}{n}\right) -f_{1}\left( \frac{k}{n}\right) 
	\geq 
	\frac{s c_s}{2}
	\left( \frac{k}{n}\right) ^{s-1}
	\left(	\frac{k+j}{n}-\frac{k}{n}\right)\geq \frac{ c_{\operatorname{max}} \  m^3}{n}\left( \frac{k}{n}
	\right) ^{s-1}. 
	\label{eq12b}
	\end{equation}
	From (\ref{eqwww2r2}) and (\ref{eq12b}) we obtain
	\begin{equation}
	\gamma_{n,k}
	\le
	f_{1}\left( \frac{k+j}{n}\right),
	\label{eq123}
	\end{equation} 
	and since $f_1$ is increasing, 
	\begin{equation}
	t_{n,k}
	\le
	\frac{k+j}{n}.
	\label{eq123}
	\end{equation} 
	
	Using the fact that $k \ge 2j$, an entirely analogous argument shows that
	
	\begin{equation*}
	f_{1}\left( \frac{k}{n}\right) -f_{1}\left( \frac{k - j}{n}\right) 
	\geq 
	\frac{s c_s}{2}
	\left( \frac{k - j}{n}\right) ^{s-1}
	\left(\frac{j}{n}\right)
	\geq 
	\frac{s c_s}{2}
	\left( \frac{k}{2 n}\right) ^{s-1}
	\left(\frac{j}{n}\right)\geq \frac{ c_{\operatorname{max}} \  m^3}{n}\left( \frac{k}{n}
	\right) ^{s-1}, 
	\label{eq12}
	\end{equation*}
	so 
	$
	\gamma_{n,k}
	\ge
	f_{1}\left( \frac{k -j}{n}\right)
	$, and
	$
	t_{n,k}
	\ge
	\frac{k -j}{n}.$

	Finally, suppose that  $t_{n,k} < \delta_0/2 \le k/n$. Recalling that $j/n_0 < \delta_0/8$, that $n \ge n_0$, and that  the nodes are nondecreasing, by (\ref{kj}) and (\ref{kgem}) we have
	$$
	\delta_0/4 \le  \delta_0/2 - \frac{1}{n} -  \frac{j}{n} \le \frac{k_0 - 1}{n} - \frac{j}{n}
	\le t_{n, k_0 - 1} \le t_{n,k}.   
	$$
	Thus, the case  $t_{n,k} < \delta_0/2 \le k/n$ has already been considered,
	since then both $k/n, t_{n,k} \in [\delta_0/4,  \delta_0) \subset [\delta_0/4, 1 - \delta_1/4]$.
	\end{proof}

\begin{definition}  The modulus of continuity of a function 
	$f \in C\left[ a,b\right] $ is
	$$\omega\left(f,\delta\right)
	:=\sup\left\{ \left|f(x)-f(y)\right|: x, y \in [a,b],  \left| x-y\right|\leq\delta
	\right\}.$$
\end{definition}

\begin{remark} Consider the classical Bernstein operator $B_n$ over $[0,1]$.
	It is well known that for all $f\in C\left[ 0,1\right] $, all $x\in [0,1]$  and all $n\geq 1$,\begin{equation*}
	\left\vert  f \left( x\right) -
	B_{n}f \left( x\right) \right\vert \leq 
	c \omega \left( f, n^{- \frac{1}{2}}\right),
	\end{equation*}
	where $c = \frac{4306 + 837 \sqrt6}{5832} \approx 1.08988$
	(cf. \cite{Si1}, \cite{Si2}). 
	
	\end{remark}

\begin{theorem} \label{sharp}
	Let $f_{1}:\left[ a,b\right] \rightarrow \mathbb{R}$ be a polynomial
	of degree $m \ge 1$, such that  $f_1^\prime > 0$ on $(a,b)$, 
	and let 
	$
	B_{n}^{f_{1}}$ be the Bernstein operator over $[a,b]$,  fixing $f_{1}$ and the constant
	function $\mathbf{1}$. Denote by  $B_{n}$  the classical Bernstein operator.
	Then
	there exist a constant $K  > 0$ and  a natural number $n_{0}$  such that 
	for all $f\in C\left[ a,b\right] $, all $x\in [a,b]$  and all $n\geq n_{0},$
	\begin{equation*}
	\left\vert  B_{n}^{f_{1}}f \left( x\right) -
	B_{n}f \left( x\right) \right\vert \leq 
	\omega \left( f, K n^{- 1}\right).
	\end{equation*}
\end{theorem}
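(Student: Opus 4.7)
The plan is to unwind both operators using the same basis $p_{n,k}$ and apply Theorem~\ref{K/n} nodewise. Since both $B_n^{f_1}f$ and $B_nf$ are expressed with the standard (normalized, nonnegative) Bernstein basis on $\mathbb{P}_n[a,b]$, their difference at a point $x\in[a,b]$ can be written as
\begin{equation*}
B_n^{f_1}f(x)-B_nf(x)=\sum_{k=0}^{n}\left[f(t_{n,k})-f\!\left(a+\tfrac{k}{n}(b-a)\right)\right]p_{n,k}(x).
\end{equation*}
This reduces the whole problem to bounding the difference between the two sets of sample values uniformly in $k$.

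Here is where Theorem~\ref{K/n} enters: because $f_1$ is a strictly increasing polynomial with $f_1^\prime>0$ on $(a,b)$, that theorem supplies a natural number $n_0$ and a constant $K>0$ (depending only on $f_1$ and $[a,b]$, not on $f$) such that for every $n\ge n_0$ and every $0\le k\le n$,
\begin{equation*}
\left|t_{n,k}-\left(a+\tfrac{k}{n}(b-a)\right)\right|\le \frac{K}{n}.
\end{equation*}
By the very definition of the modulus of continuity, this yields
\begin{equation*}
\left|f(t_{n,k})-f\!\left(a+\tfrac{k}{n}(b-a)\right)\right|\le \omega\!\left(f,\tfrac{K}{n}\right),
\end{equation*}
a bound which is uniform in $k$.

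Plugging this back into the displayed sum and using $p_{n,k}(x)\ge 0$ together with the partition of unity property $\sum_{k=0}^n p_{n,k}(x)=1$, the triangle inequality gives
\begin{equation*}
\left|B_n^{f_1}f(x)-B_nf(x)\right|\le \omega\!\left(f,\tfrac{K}{n}\right)\sum_{k=0}^{n}p_{n,k}(x)=\omega\!\left(f,K n^{-1}\right),
\end{equation*}
which is the claimed bound. There is no serious obstacle here: the genuine work has already been done in Theorem~\ref{K/n}, and all the present statement does is package that estimate for nodes into an estimate for the operator difference via the standard modulus-of-continuity inequality and the partition-of-unity property of the Bernstein basis.
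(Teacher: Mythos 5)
Your argument is correct and follows exactly the same route as the paper's proof: expand both operators in the common Bernstein basis, invoke Theorem~\ref{K/n} for the uniform $O(1/n)$ node displacement, convert this to a modulus-of-continuity bound, and sum via the triangle inequality and partition of unity.
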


\begin{proof} 
By Theorem  \ref{K/n}, 
\begin{equation*}
\left| B_{n}^{f_{1}}f  \left( x\right) - B_{n}f  \left(
x\right)\right| 
 =
 \left| \sum_{k=0}^{n}\left( f\left( t_{n,k}\right) -f\left( k/n\right) \right) p_{n,k}\left( x\right) \right|
\end{equation*}
\begin{equation*}
	\le
	 \sum_{k=0}^{n} \left| f\left( t_{n,k}\right) -f\left( k/n\right) \right| p_{n,k}\left( x\right)
	\le
\sum_{k=0}^{n}  \omega \left( f, K n^{- 1}\right) p_{n,k}\left( x\right) 
=
 \omega \left( f, K n^{- 1}\right).
\end{equation*}
\end{proof}

Hence, 	for  $f\in C\left[ 0,1\right] $, $ n \ge K^2$, 
$c = \frac{4306 + 837 \sqrt6}{5832}$ and $x\in [0,1]$, 
\begin{equation*}
\left\vert  f \left( x\right) -
B_{n}^{f_{1}} f \left( x\right) \right\vert 
\le
\left\vert  f \left( x\right) -
	B_{n}f \left( x\right) \right\vert  + \left\vert  B_n f \left( x\right) -
	B_{n}^{f_{1}} f \left( x\right) \right\vert
	\end{equation*}
	\begin{equation*}
	\le
c \omega \left( f, n^{- \frac{1}{2}}\right) + 
\omega \left( f, K n^{-1}\right)
\le
\left(c + 1 \right)  \omega \left( f, n^{- \frac{1}{2}}\right),
\end{equation*}
which is comparable to the rate of convergence  of $B_n$.

\begin{definition} \label{lipopnorm} A function $f:[a,b]\to \mathbb{R}$ is {\em Lipschitz} 
if there exists a constant $K > 0$ such that for all $x, y \in[a,b]$,
$	\left| f(x)-f(y)\right|  
	\le 
	K |x - y|.
$
If for some  $0<\alpha \le 1$ and $K>0$ we have 
$	\left| f(x)-f(y)\right|  
\le 
K |x - y|^{\alpha},
$
then we say that $f$ is {\em H\"older continuous} of order $\alpha$.
\end{definition}

We will often use the shorter expression ``$\alpha$-H\"older". 
Thus, 1-H\"older  is the same as Lipschitz. Furthermore, if $
0 < \alpha < \beta \le 1$ and $f$ is $\beta$-H\"older, then it is
also $\alpha$-H\"older, since
$	\left| f(x)-f(y)\right|  
\le 
K |x - y|^{\beta}
= 
K |x - y|^{\alpha} |x - y|^{\beta - \alpha}
\le 
K  (b - a)^{\beta - \alpha} |x - y|^{\alpha}.
$

The next theorem may be well known, but an internet search has yielded
no results.

\begin{theorem} \label{sHolder} Let $p: [a,b] \to \mathbb{R}$ be an increasing polynomial of degree $m\ge 1$, and let $0 \le s - 1$ be the largest order of the zeros that $p^\prime$ may
	have in $[a,b]$.
	Then there exist a $\delta > 0$ and a $K > 0$ such that 
	for all $u, v \in[p(a) , p(b) ]$ with $ |u - v| \le \delta$,
	$	\left| p^{ -1 }(u)- p^{ -1 }(v)\right|  
	\le  
	K |u - v|^{1/s}.
	$
	Hence, $	\left| p^{ -1 }(u)- p^{ -1 }(v)\right|  
	\le  
	K^\prime  |u - v|^{1/m}
	$
	for some $K^\prime > 0$.
\end{theorem}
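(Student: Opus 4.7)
The plan is to localize: for each $z \in [a,b]$ I will find a neighborhood $U_z$ on which $p^{-1}$ (restricted to $p(U_z)$) is $(1/s_z)$-H\"older, where $s_z - 1$ is the order of $z$ as a zero of $p'$, with the convention that $s_z = 1$ if $p'(z) > 0$; note that $s_z \le s$ for every $z$. The open sets $p(U_z)$ then cover the compact interval $[p(a), p(b)]$; picking a Lebesgue number $\delta$ for a finite subcover ensures that any $u, v \in [p(a),p(b)]$ with $|u - v| \le \delta$ both lie in a single $p(U_{z_j})$, where the local estimate applies. Since $s_{z_j} \le s$, the routine interpolation $|u - v|^{1/s_{z_j}} \le (p(b) - p(a))^{1/s_{z_j} - 1/s}|u-v|^{1/s}$ then yields a single $(1/s)$-H\"older constant $K$; the final $(1/m)$-H\"older statement follows by the same interpolation from $s \le m$.

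For the local estimate at a regular point $z$, continuity of $p'$ gives a neighborhood on which $p' \ge p'(z)/2$, and the Mean Value Theorem furnishes a Lipschitz (hence $1$-H\"older) bound for $p^{-1}$. For the local estimate at a critical point $z$, I use the Taylor expansion $p(x) - p(z) = c(x-z)^{s_z}(1 + h(x))$ with $h$ continuous, $h(z) = 0$, and $c > 0$; both the positivity of $c$ and the fact that $s_z$ is odd at an interior critical point follow from $p$ being strictly increasing, since $p' \ge 0$ forces the first non-vanishing Taylor coefficient of $p'$ at $z$ to be positive and its exponent to be even. On a neighborhood of $z$ where $|h| \le 1/2$, define $\rho(x) := (x - z)(1 + h(x))^{1/s_z}$; this is $C^1$ with $\rho(z) = 0$ and $\rho'(z) = 1$, hence a local diffeomorphism. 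The identity $c\,\rho(x)^{s_z} = p(x) - p(z)$ then yields $p^{-1}(u) = \rho^{-1}\bigl(((u - p(z))/c)^{1/s_z}\bigr)$ on a neighborhood of $p(z)$, where the real $s_z$-th root is unambiguous (since $s_z$ is odd at interior $z$, and since the argument is non-negative at an endpoint $z$). Combining the Lipschitz constant of $\rho^{-1}$ with the elementary H\"older inequality $|\alpha^{1/n} - \beta^{1/n}| \le 2^{1-1/n}|\alpha - \beta|^{1/n}$, which follows from the convexity bound $(x + y)^n \le 2^{n-1}(x^n + y^n)$ for $x, y \ge 0$ (applied after taking $n$-th roots), produces the desired local $(1/s_z)$-H\"older estimate.

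The main obstacle is the analysis at critical points: one must verify that $s_z$ is odd at interior critical points so the real $s_z$-th root is well-defined and the formula for $p^{-1}$ makes sense, check that $\rho$ is genuinely a $C^1$ local diffeomorphism with $\rho'(z) = 1$, and establish the root H\"older inequality with an explicit constant. Once those points are handled, the compactness step and the final interpolation to a single H\"older exponent are routine.
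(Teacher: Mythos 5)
Your argument is correct and takes a genuinely different route from the paper's. Where the paper works directly with one-sided polynomial bounds $\tfrac{c_{s_i}}{2}(y-x_i)^{s_i}\le p(y)-p(x_i)$ (and its reverse) near each zero of $p'$, and then exploits the local concavity/convexity of $p^{-1}$ on the two sides of each critical value, with an explicit case split according to the signs of $u-p(x_i)$ and $v-p(x_i)$, you instead introduce a smooth normal form: the $C^\infty$ local diffeomorphism $\rho(x)=(x-z)\bigl(1+h(x)\bigr)^{1/s_z}$ with $\rho'(z)=1$ converts $p-p(z)$ into the pure power $c\,\rho^{s_z}$, so $p^{-1}$ becomes the Lipschitz map $\rho^{-1}$ composed with the real $s_z$-th root. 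This cleanly decouples the "analytic" content (the root is $(1/s_z)$-H\"older, with your convexity-based constant $2^{1-1/s_z}$, which correctly covers the mixed-sign case at interior critical points) from the "geometric" content (localization, compactness, Lebesgue number), and it avoids the sign case analysis that the paper does by hand. The paper's route is more elementary (no inverse function theorem, no change of variables) and gives the one-sided estimates it also reuses in Theorem~\ref{K/n}, while yours is more modular and arguably more transparent. One minor point worth fixing: at an endpoint $z=b$ with $s_z$ even, one has $c<0$ and $\rho\le 0$ near $b$, so $\rho(x)$ equals the \emph{negative} $s_z$-th root of $(u-p(b))/c$, not the non-negative one as your parenthetical suggests; this is a sign adjustment only and does not affect the estimate. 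Also note that for non-negative arguments the sharper bound $|\alpha^{1/n}-\beta^{1/n}|\le|\alpha-\beta|^{1/n}$ already holds, so the factor $2^{1-1/n}$ is only needed for the opposite-sign case.
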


\begin{proof} If  $p^{\prime }  >0$ on
	$\left[ a,b\right]$, then $p^{ -1 }$ is Lipschitz on 	$\left[ p(a) , p(b)\right]$ (with constant $K =\|(p^{ -1 })^\prime\|_\infty$) and hence,  H\"older of order $1/s$ = 1. 
	So suppose $p^{\prime }$ vanishes somewhere in  	$[a,b]$, say, at
	the distinct  points $a \le x_0 <  \dots <  x_j \le b$ (if $p^{\prime }$
	has only one zero,  at $x_0$, then $a \le x_0 \le b$, and the preceding notation 
	is
	not intended to imply that  $x_0 < b$).

	For $0 \le i \le j$, let 
	$1 \le s_i - 1 \le m - 1$ be the  order of the zero of $p^\prime$ at $x_i$.
	Then
	\begin{equation}\label{t0}
	p (x) - p(x_i) 
	= 
	\sum_{l=s_i}^{m}c_{l}  (x - x_i)^{l} = c_{s_i}  (x - x_i)^{s_i} + O (|x - x_i|^{s_i+ 1}).
	\end{equation}
	Since $p$ is increasing on $[a,b]$,  for each $i$ there exists   an $\varepsilon_i > 0$ such that if $x_0 = a$, then
	$p$ is convex on $[a, a + \varepsilon_0]$, if $x_i  \in (a,b)$, then 
	$p$ is concave  on $[x_i -\varepsilon_i, x_i]$ and convex on $[x_i, x_i + \varepsilon_i]$, and if
	$x_j = b$, then $p$ is  concave  on $[b -\varepsilon_j, b]$.
	
	Note  that for $x_i \in [a,b)$, we have $c_{s_i} > 0$, since $p$ is increasing
	on $[x_i, b]$ (but if $x_i = b$ and $s_i$ is even, then $c_{s_i} < 0$).
	Suppose first that $x_i \in (a,b)$.
	Select 
	$\delta_i \in (0,  \varepsilon_i]$  so that for all 
	$y \in [x_i, x_i + \delta_i]$, 
	\begin{equation}\label{t1}
	\frac{c_{s_i} }{2}( y - x_i )^{s_i}
	\le 
	p (y) - p(x_i),
	\end{equation}
	with the inequality reversed when 
	$y \in [x_i - \delta_i, x_i]$:
	\begin{equation}\label{t2}
	\frac{c_{s_i} }{2}( y - x_i )^{s_i}
	\ge 
	p (y) - p(x_i).
	\end{equation}
	
	The one sided cases $x_0 = a$ and $x_j = b$ are simpler and handled in the same
	way: with the same notation as above, we also have
	\begin{equation} \label{t3}
	\frac{c_{s_0} }{2}( y - a )^{s_0}
	\le 
	p (y) - p(a)
	\mbox{ \ \ \ and \ \ \ }
	\frac{c_{s_j} }{2}( y - b )^{s_j}
	\ge 
	p (y) - p(b)
	\end{equation}
	on $[a, a + \delta_0]$ and 
	$[b - \delta_j, b]$ respectively.
	Next, choose $\delta >0$ satisfying the following two conditions:

	i) For $i = 0, \dots, j$, 
	$$
	[a,b] \cap p^{-1} ([p(x_i) - 2 \delta, p(x_i) + 2 \delta])
	\subset  
	[a,b] \cap [x_i - \delta_i, x_i + \delta_i].
	$$
	
	ii)  For $i = 0, \dots, j$, the intervals $[p(x_i) - 2 \delta, p(x_i) + 2 \delta]$ are disjoint.
	
	Now let $u,v \in [p(a) , p(b)]$ satisfy $|u - v| \le \delta$.
	On $[p(a), p(b)] \setminus \cup_{i = 0}^j (p(x_i) - \delta, p(x_i) + \delta)$,
	$\|(p^{ -1 })^\prime\|_\infty < \infty$, and hence $p^{ -1 }$ is Lipschitz 
	there. Next, assume that for some
	$i$, either 
	$u\in (p(x_i) - \delta, p(x_i) + \delta)$ or 
	$v\in (p(x_i) - \delta, p(x_i) + \delta)$. 
	Then both $u, v\in (p(x_i) - 2 \delta, p(x_i) + 2 \delta)$.
	By a translation and by substracting a constant if needed, we may, 
	without loss of generality, suppose
	that $0 = x_i = p (x_i)$, so 
	$u, v \in ( - 2 \delta,  2 \delta)$. 
	
	Assume first 
	that $x_i < b$. Since 
	\begin{equation*}
		\frac{c_{s_i} y^{s_i} }{2}
		\le 
		p (y)  \mbox{ \ on \ } [0, \delta_i], \mbox{  \ it follows that  \ }
		\left(\frac{ 2 u }{c_{s_i}}\right)^{1/s_i}
		\ge 
		p^{-1} (u)  \mbox{ \ on \ } [0, 2 \delta).
	\end{equation*}
	By the concavity of  $p^{ -1 }$
	on $ [0,  2 \delta)$,
	if $0 \le u, v$, then 
	$$
	|p^{ -1 } (u) - p^{ -1 }(v)| \le
	p^{ -1 } (|u - v|)
	\le
	\left(\frac{ 2 | u -  v| }{c_{s_i}}\right)^{1/s_i}.
	$$
	If $0 = x_i = a$ we do not need to do anything else. If $0 = x_i \in (a,b)$, 
	and $0 \ge u, v$,  say, with $u < v \le 0$, we argue
	in the same way,  but paying
	attention to the negative signs. The interior zeros of $p^\prime$ have
	even order, since $p^\prime \ge 0$ on $[a,b]$, so $s_i - 1$ is even, and $s_i$, odd.
	By (\ref{t2}), for all $y\in [- \delta_i, 0]$, 
	\begin{equation*}
		\frac{c_{s_i} }{2} y^{s_i} \ge p(y).
	\end{equation*}
	Writing $w = p(y)$ for  $w \in (- 2 \delta,  0]$, we have
	\begin{equation*}
		\frac{c_{s_i} }{2} \left(p^{-1} (w)\right)^{s_i} \ge w,
	\end{equation*}
	or equivalently, 
	$$
	\frac{c_{s_i} }{2} \left|p^{-1} (w)\right|^{s_i} \le |w|;
	$$
	hence,
	\begin{equation} \label{inv}
	|p^{-1} (w)| \le \left(	\frac{2 }{c_{s_i}} |w|\right)^{1/s_i}.	
	\end{equation}
	Now by the convexity of  $p^{ -1 }$
	on $(- 2 \delta,  0]$, 
	$$
	0 > p^{ -1 } (u) - p^{ -1 }(v) \ge
	p^{ -1 } (u - v),
	$$
	so
	\begin{equation} \label{inv2}
	|p^{ -1 } (u) - p^{ -1 }(v)| \le
	|p^{ -1 } (u - v)| 
	\le
	\left(	\frac{2 }{c_{s_i}} \left|u - v\right|\right)^{1/s_i}.
	\end{equation}
	Lastly, if, say $u < 0 < v$, and for instance,
	$v\ge |u|$, then
	$$
	|p^{ -1 } (u) - p^{ -1 }(v)| \le
	|p^{ -1 } (-v) - p^{ -1 }(v)| 
	\le
	2 \left(\frac{ 2 |v| }{c_{s_i}}\right)^{1/s_i}
	\le
	2 \left(\frac{ 2 | u -  v| }{c_{s_i}}\right)^{1/s_i}.
	$$
	To finish, if $0 = x_i = b$ and $s_i$ is odd, the argument
	is exactly as in the case $u, v \in (- 2 \delta,  0]$ seen above, 
	while if $s_i$ is even, then $c_{s_i} < 0$ in (\ref{t0}), since $p$ is increasing
	on $[a,b]$; from (\ref{t3}) we conclude that for $w = p(y)\in (- 2 \delta,  0]$,
	$$
	\frac{|c_{s_i}| }{2} \left(|p^{-1} (w)|\right)^{s_i} \le |w|,
	$$
	and taking $-2 \delta < u  < v \le 0$, we have
	\begin{equation*}
		|p^{ -1 } (u) - p^{ -1 }(v)| \le
		|p^{ -1 } (u - v)| 
		\le
		\left(	\frac{2 }{|c_{s_i}|} |u - v|\right)^{1/s_i}.
	\end{equation*}
	Now  $i$ is arbitrary and $s_i \le s \le m$, so the result follows.
\end{proof}

It is well known and easy to prove, that if $g :[a,b] \to \mathbb{R}$ is  continuous, and there
exist a $\delta > 0$ 
and a $K > 0$ such that 
for all $x, y \in[a,b]$ with $ |x - y| \le \delta$, we have
$	\left| g(x)- g(y)\right|  
\le  
K |x - y|^{\alpha}, 
$
then $g$ is
H\"older continuous of order $\alpha$. 
Though Theorem \ref{sHolder} is sufficient for our purposes, the next result is
interesting in itself.

\begin{corollary} Let $p: [a,b] \to \mathbb{R}$ be an increasing polynomial of degree $m\ge 1$.
	Then $p^{ -1 } : [p(a) , p(b)] \to [a,b]$ is  H\"older continuous of order
	$1/s$, where $s - 1 \ge 0$ denotes the  largest order of any zero of $p^\prime$ 
	in $[a,b]$.
\end{corollary}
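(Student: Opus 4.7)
The plan is to obtain the corollary as a direct consequence of Theorem \ref{sHolder} by promoting the local H\"older estimate (valid only for pairs $u,v$ close together) to a global one, using the fact noted just before the corollary. So I would first invoke Theorem \ref{sHolder} to produce constants $\delta > 0$ and $K > 0$ such that
\begin{equation*}
\left| p^{-1}(u) - p^{-1}(v) \right| \le K |u - v|^{1/s}
\end{equation*}
whenever $u, v \in [p(a), p(b)]$ with $|u - v| \le \delta$.

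To handle the remaining case $|u - v| > \delta$, the idea is to bound $|p^{-1}(u) - p^{-1}(v)|$ trivially by $b - a$ (since $p^{-1}$ takes values in $[a,b]$) and then artificially restore the $|u-v|^{1/s}$ factor. Concretely, if $|u - v| > \delta$, then $(|u - v|/\delta)^{1/s} > 1$, so
\begin{equation*}
\left| p^{-1}(u) - p^{-1}(v) \right| \le b - a \le (b - a)\,\delta^{-1/s}\,|u - v|^{1/s}.
\end{equation*}
Setting $K' := \max\{K,\ (b - a)\delta^{-1/s}\}$ then gives the desired global H\"older estimate with exponent $1/s$ on all of $[p(a), p(b)]$.

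There is no real obstacle here; the only subtlety is verifying that Theorem \ref{sHolder} is applicable in all relevant sub-cases (interior zeros and endpoint zeros of $p'$), which is already covered in the statement of that theorem. The remark immediately preceding the corollary is essentially an explicit prompt to use this two-scale splitting argument, so the proof amounts to assembling these pieces.
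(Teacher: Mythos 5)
Your proposal is correct and matches the paper's intended argument: the paper deliberately places the remark that a local H\"older estimate for a continuous function on a compact interval upgrades to a global one right before the corollary, and you carry out exactly that upgrade, using Theorem \ref{sHolder} for small $|u-v|$ and the trivial bound $|p^{-1}(u)-p^{-1}(v)|\le b-a$ together with $(|u-v|/\delta)^{1/s}>1$ for large $|u-v|$.
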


\begin{theorem} 
Let $f_{1}$ be a strictly increasing polynomial on $[a,b]$,
	of degree $m$, and let  $s - 1 \ge 0$ denote the largest order of the zeros that $f_1^\prime$ may
	have in
	$\left[a,b\right]$.   Then there exist  constants  $n_0 \ge m$ and  $K > 0$ such that
	$B_n^{f_1}$ 
	is well defined  whenever $n \ge n_0$, and 
	for all  $0\leq k\leq n$,  we have  
	$$
	\left|t_{n,k} - \left(a+\frac{k}{n}\left( b-a\right)\right)\right| 
	\le \frac{K}{n^{1/s}},
	$$	where the points $t_{n, k} = f_{1}^{-1}\left( \gamma_{n,k}\right)$ are the nodes
	 of   $B_n^{f_1}$. 
	\end{theorem}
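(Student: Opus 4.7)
The proof will be a direct combination of the two main ingredients already established: the uniform estimate of Theorem \ref{unifapprox}, and the Hölder regularity of $f_1^{-1}$ from Theorem \ref{sHolder} (or its Corollary). The plan is as follows.

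First I would fix $n_0$ large enough so that Theorem \ref{Thm3} guarantees $B_n^{f_1}$ is well defined for every $n \ge n_0$, i.e. all nodes $t_{n,k}$ lie in $[a,b]$. Since $f_1$ is strictly increasing, this is equivalent to $\gamma_{n,k} \in [f_1(a), f_1(b)]$ for all $0 \le k \le n$. Note also that $f_1\bigl(a+\frac{k}{n}(b-a)\bigr) \in [f_1(a), f_1(b)]$ trivially, so both quantities whose images under $f_1^{-1}$ we wish to compare lie in the domain of $f_1^{-1}$.

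Next, Theorem \ref{unifapprox} applied to $f_1$ yields a constant $C > 0$ depending only on $f_1$ and $[a,b]$ such that
\begin{equation*}
\left|\gamma_{n,k} - f_1\!\left(a + \tfrac{k}{n}(b-a)\right)\right| \le \frac{C}{n}
\end{equation*}
for all $n \ge m$ and all $0 \le k \le n$. By possibly enlarging $n_0$, we can ensure that $C/n \le \delta$ for all $n \ge n_0$, where $\delta > 0$ is the constant provided by Theorem \ref{sHolder} for the polynomial $p = f_1$. Hence for every such $n$ and every $k$, the pair $u := \gamma_{n,k}$ and $v := f_1\bigl(a+\frac{k}{n}(b-a)\bigr)$ satisfies $|u - v| \le \delta$.

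I would then apply Theorem \ref{sHolder} to obtain a constant $K' > 0$ such that
\begin{equation*}
\left|f_1^{-1}(u) - f_1^{-1}(v)\right| \le K' |u - v|^{1/s}.
\end{equation*}
Since $t_{n,k} = f_1^{-1}(\gamma_{n,k})$ and $a+\frac{k}{n}(b-a) = f_1^{-1}\bigl(f_1(a+\frac{k}{n}(b-a))\bigr)$, combining the two displayed inequalities gives
\begin{equation*}
\left|t_{n,k} - \left(a + \tfrac{k}{n}(b-a)\right)\right| \le K' \left(\frac{C}{n}\right)^{1/s} = \frac{K}{n^{1/s}},
\end{equation*}
with $K := K' C^{1/s}$. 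This establishes the claimed bound.

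There is no real obstacle here; the theorem is essentially a corollary of the two preceding results. The only subtle point worth highlighting in the write-up is the verification that $n_0$ can be chosen to simultaneously achieve (i) the well-definedness of $B_n^{f_1}$ from Theorem \ref{Thm3}, and (ii) the smallness condition $C/n \le \delta$ that allows Theorem \ref{sHolder} to be invoked in the form with the Hölder exponent $1/s$ rather than $1/m$. Using instead the Corollary following Theorem \ref{sHolder} would avoid condition (ii), at the cost of replacing $1/s$ by $1/m$ in the final estimate, which would be a strictly weaker conclusion.
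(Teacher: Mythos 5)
Your proof is correct, and it is actually cleaner than the paper's. The paper splits $[a,b]$ into a central strip and two endpoint strips: near the endpoints it reuses the mechanism from the proof of Theorem \ref{K/n} (where the coordinates, and hence the nodes, are shown to be monotone, and the closeness to $k/n$ is obtained by the same estimates as there), while on the central interval $[a+\delta_0/2,\,b-\delta_1/2]$ it applies Theorem \ref{sHolder} to the restriction of $f_1$ to that subinterval. You instead apply Theorem \ref{sHolder} directly to $f_1$ on all of $[a,b]$, which, together with Theorem \ref{unifapprox} and the well-definedness furnished by Theorem \ref{Thm3}, immediately yields the $n^{-1/s}$ rate once $n$ is large enough that the uniform approximation error $C/n$ is below the constant $\delta$ from Theorem \ref{sHolder}. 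Both routes are sound; the paper's case split produces finer information near the endpoints (an $O(1/n)$ rate there, even when $f_1'$ vanishes at $a$ or $b$), but that extra precision is not used in the statement being proved, so your shorter argument suffices.

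One small factual slip in your closing remark: the Corollary following Theorem \ref{sHolder} asserts that $p^{-1}$ is globally H\"older of order $1/s$, not merely of order $1/m$. Invoking that Corollary would therefore still deliver the $n^{-1/s}$ bound while letting you drop the smallness condition $C/n\le\delta$ altogether. The fallback exponent $1/m$ appears only in the last sentence of the \emph{statement} of Theorem \ref{sHolder}, as a cruder consequence of the local $1/s$ estimate; it is not what the Corollary gives.
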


\begin{proof} The argument to handle the possible zeros of $f_1^\prime$ at the endpoints
is essentially  identical to the one used in the proof of Theorem \ref{K/n}: keeping the analogous choices made there
for $\delta_0$ and $\delta_1$, on
$[a, a +  \delta_0/2)$ and $(b - \delta_1/2 , b]$ we
have that the coordinates $\gamma_{n,k}$ of $f_1$ are non-decreasing
when $k/n \in [a, a +  \delta_0/2) \cup (b - \delta_1/2 , b]$, and since
$f_1$ is increasing, the nodes $t_{n,k}$ are also non-decreasing.

As for the central region  $[a + \delta_0/2 , b - \delta_1/2]$,  
 we use Theorem  \ref{sHolder} for $f_1|_{[a + \delta_0/2 , b - \delta_1/2]}$:
since by Theorem \ref{unifapprox} we have
	$$
	\left|f_1 \left(a+\frac{k}{n}\left( b-a\right)\right)- \gamma_{n,k} \right|  =  O(1/n)
	$$ 
	(with constant depending
	only on $f_1$ and $[a,b]$) and
	$f_1^{-1}$ is $1/s$-H\"older over $f_1 ({[a + \delta_0/2 , b - \delta_1/2]})$  (again with constant depending
	only on $f_1$ and $[a,b]$) 
	we conclude that
		$$
		\left|a+\frac{k}{n}\left( b-a\right)-  t_{n,k} \right| 
		=
\left|f_1^{-1}\left(f_1\left(a+\frac{k}{n}\left( b-a\right)\right)\right) -  f_{1}^{ -1 } (\gamma_{n,k} )\right|
			$$
			$$
			 =
 O\left(\left|f_1\left(a+\frac{k}{n}\left( b-a\right)\right) -  \gamma_{n,k} \right|^{1/s}\right)
			 =
			  O(n^{- 1/s}).
			 $$ 
			 \end{proof}
			 
			 Thus, the following variant of Theorem \ref{sharp}, with 
			 $1/n^{1/s}$ in the modulus of continuity instead of $1/n$,  is obtained.

\begin{theorem} \label{mod}
Let $f_{1}:\left[ a,b\right] \rightarrow \mathbb{R}$ be an increasing polynomial
of degre $m \ge 1$, and let 
$
B_{n}^{f_{1}}$ be the Bernstein operator over $[a,b]$,  fixing $f_{1}$ and the constant
function $\mathbf{1}$. Denote by  $B_{n}$  the classical Bernstein operator,
and denote by   $s - 1 \ge 0$ the largest order of the zeros of  $f_1^\prime$
in $[a,b]$.
Then
there exist a constant $K = K(f_1|_{[a,b]}) > 0$ and  a natural number $n_{0}$  such that 
for all $f\in C\left[ a,b\right] $, all $x\in [a,b]$  and all $n\geq n_{0},$
\begin{equation*}
\left\vert  B_{n}^{f_{1}}f \left( x\right) -
B_{n}f \left( x\right) \right\vert \leq 
 \omega \left( f, K n^{- 1/s}\right).
\end{equation*}
\end{theorem}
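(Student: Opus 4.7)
The plan is to mimic exactly the argument used in the proof of Theorem \ref{sharp}, replacing the $O(1/n)$ bound on the distance between the node $t_{n,k}$ of $B_n^{f_1}$ and the corresponding node $a + \frac{k}{n}(b-a)$ of the classical operator $B_n$ by the weaker $O(1/n^{1/s})$ bound supplied by the theorem immediately preceding Theorem \ref{mod}.

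More concretely, I would first fix $n\ge n_0$ so that $B_n^{f_1}$ is well defined (provided by Theorem \ref{Thm3}) and so that the previous theorem yields a constant $K = K(f_1|_{[a,b]}) > 0$ with
\begin{equation*}
\left|t_{n,k} - \left(a + \tfrac{k}{n}(b-a)\right)\right| \le K n^{-1/s}
\end{equation*}
for every $0 \le k \le n$. Then, using that both $B_n^{f_1}$ and $B_n$ employ the same (standard, normalized) Bernstein basis $p_{n,k}$, and that the weights of $B_n^{f_1}$ equal $1$ by (\ref{one}), I would write
\begin{equation*}
B_n^{f_1}f(x) - B_n f(x) = \sum_{k=0}^{n} \bigl( f(t_{n,k}) - f\bigl(a + \tfrac{k}{n}(b-a)\bigr) \bigr)\, p_{n,k}(x).
\end{equation*}

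Taking absolute values, invoking the monotonicity of the modulus of continuity $\omega(f,\cdot)$, the fact that $p_{n,k}(x) \ge 0$ on $[a,b]$, and that the standard basis forms a partition of unity, I obtain
\begin{equation*}
\left| B_n^{f_1}f(x) - B_n f(x) \right| \le \sum_{k=0}^{n} \omega\bigl(f, K n^{-1/s}\bigr)\, p_{n,k}(x) = \omega\bigl(f, K n^{-1/s}\bigr),
\end{equation*}
which is the required inequality.

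There is no real obstacle in this argument once the previous theorem is available: all the hard analytic work (the uniform estimate of Theorem \ref{unifapprox}, the H\"older regularity of $f_1^{-1}$ from Theorem \ref{sHolder}, and their combination in the preceding theorem) has already been done. The only point to double-check is that the constant $K$ in the node estimate depends only on $f_1$ and $[a,b]$, not on $k$ or $n$, so that it may be pulled out of the sum uniformly; this is exactly what the statement of the preceding theorem provides.
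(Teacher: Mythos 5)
Your proposal is correct and is precisely the argument the paper intends: the paper states Theorem \ref{mod} as the direct consequence of the preceding node-distance estimate $\left|t_{n,k} - \left(a + \tfrac{k}{n}(b-a)\right)\right| = O(n^{-1/s})$ combined with the partition-of-unity argument already used in Theorem \ref{sharp}. Nothing is missing.
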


\begin{theorem} With the same notation as in the preceding result, for
	all  $f\in C\left[ a,b\right] $,
	$\lim_{n\to \infty} \|B_{n}^{f_{1}} f - f\|_\infty = 0$. 
\end{theorem}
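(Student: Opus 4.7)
The plan is to derive this from Theorem \ref{mod} together with the classical convergence result for $B_n$, via the triangle inequality. Specifically, I would write
\begin{equation*}
\|B_n^{f_1} f - f\|_\infty \le \|B_n^{f_1} f - B_n f\|_\infty + \|B_n f - f\|_\infty
\end{equation*}
and then argue that both summands tend to zero as $n\to\infty$.

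For the first term, Theorem \ref{mod} gives, for $n \ge n_0$, the bound $\|B_n^{f_1} f - B_n f\|_\infty \le \omega(f, K n^{-1/s})$. Since $f \in C[a,b]$ is uniformly continuous on the compact interval $[a,b]$, we have $\omega(f,\delta) \to 0$ as $\delta \to 0^+$, and $K n^{-1/s} \to 0$, so this term vanishes in the limit. For the second term, I would invoke the classical Bernstein approximation theorem (for instance, via the quantitative estimate $\|B_n f - f\|_\infty \le c\,\omega(f, n^{-1/2})$ recalled in the remark preceding Theorem \ref{sharp}), which immediately yields $\|B_n f - f\|_\infty \to 0$.

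There is no real obstacle here; the entire content of the convergence statement has already been packaged into Theorem \ref{mod}, whose proof is the nontrivial part (combining the uniform node-estimate from Theorem \ref{unifapprox} with the H\"older continuity of $f_1^{-1}$ from Theorem \ref{sHolder}). Once that is in hand, the uniform convergence $B_n^{f_1} f \to f$ follows by a two-line triangle inequality argument. The only mild care needed is to observe that Theorem \ref{mod} requires $n \ge n_0$ for some threshold $n_0$ depending on $f_1$ and $[a,b]$, but this is harmless for a statement about the limit as $n \to \infty$.
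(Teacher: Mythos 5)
Your proof is correct and follows precisely the same route as the paper: a triangle inequality splitting $\|B_n^{f_1}f - f\|_\infty \le \|B_n^{f_1}f - B_n f\|_\infty + \|B_n f - f\|_\infty$, with the first term controlled by Theorem \ref{mod} plus uniform continuity of $f$, and the second by the classical Bernstein convergence theorem. No discrepancies.
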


\begin{proof}
Since $f$ is continuous on $[a,b]$, $\lim_{n\to 0} 
 \omega \left( f, K n^{- 1/s} \right) = 0$, so 
by Theorem \ref{mod} and the convergence of the standard Bernstein operator,  
	\begin{equation*}
	\lim_{n\to \infty} \|B_{n}^{f_{1}} f - f\|_\infty
	\le
	\lim_{n\to \infty} \|B_{n}^{f_{1}} f - B_n f\|_\infty
	+
	\lim_{n\to \infty} \|B_{n} f - f\|_\infty = 0.
	\end{equation*}
\end{proof}

We finish with some remarks on shape preservation.
Regarding the convexity preserving properties  of the generalized Bernstein operator $B_n^{f_1}$ when $f_1^\prime > 0$
on $(a,b)$, by  \cite{AKR08b} they are analogous
to the ones of the standard Bernstein operator, but understood with $(\mathbf{1}, f_1)$-convexity 
replacing ordinary convexity.

\begin{definition} Let $E\subset\mathbb{R}$. A function $f: E
	\rightarrow\mathbb{R}$ is called \emph{$(f_{0},f_{1})$-convex } on $E$
	 if for all $x_{0},x_{1},x_{2}$ in $E$ with $
	x_{0}<x_{1}<x_{2} $, the determinant 
	\begin{equation}
	\operatorname{Det}_{x_{0},x_{1},x_{2}}\left( f\right) :=\det\left( 
	\begin{array}{ccc}
	f_{0}\left( x_{0}\right) & f_{0}\left( x_{1}\right) & f_{0}\left(
	x_{2}\right) \\ 
	f_{1}\left( x_{0}\right) & f_{1}\left( x_{1}\right) & f_{1}\left(
	x_{2}\right) \\ 
	f\left( x_{0}\right) & f\left( x_{1}\right) & f\left( x_{2}\right)
	\end{array}
	\right)  \label{defD}
	\end{equation}
	is non-negative. 
\end{definition}
Note in particular that convexity is the same as $(\mathbf{1}, x)$-convexity.
 In \cite[Theorem 25]{AKR08b}
it is shown that when the nodes fail to be non-decreasing, $(\mathbf{1}, f_1)$-convexity
may be lost, but the example given there is not a polynomial space. The same
phenomenon can occur in the polynomial context, as we prove next.

\begin{example} Consider 
	$\mathbb{P}_4[-1,1] =\operatorname{Span}\{1,x, x^2, x^3, x^4\}$,  with the standard
	Bernstein bases over 
	$[-1,1]$. It is easy to check that the coordinates of $f_1(x) := x^3$ are 
	$\gamma_{4,0} = -1$, $\gamma_{4,1}= 1/2$, $\gamma_{4,2} = 0,  \gamma_{4,3} = 
	- 1/2$  and 
	$\gamma_{4,4} = 1$ (just plug in, and simplify; alternatively, these
	coordinates can be obtained from formula (\ref{Berncoor})). 
	The nodes are the cube roots of the corresponding coordinates, so 
	$$
	B^{f_1}_4 e_4(x)
	=
\frac{(1 - x)^4}{16} + \left(\frac{1}{2}\right)^{4/3} \frac{(1 - x)^3 (1 + x)}{4}
+ \left(\frac{1}{2}\right)^{4/3} \frac{(1 - x) (1 + x)^3}{4}
+
\frac{(1 + x)^4}{16}.
	$$ 
	By \cite[Theorem 5]{BePa}, $f$ is $(\mathbf{1}, f_1)$-convex if and only if
$f\circ f_1^{-1}$ is convex in the ordinary sense.
Note that  $e_4 (x) = x^4$ is $(\mathbf{1}, x^3)$-convex,
	since $e_4(x^{1/3}) = (x^{1/3})^4$ is convex. But 
	$
	B^{f_1}_4 e_4(x)
	$ 
	is not
	$(\mathbf{1}, x^3)$-convex: expanding,
	simplifying, and replacing $x$ with $x^{1/3}$, we get 
	$$
	B_4^{f_1} e_4\left(x^{1/3}\right)  = \frac{2^{2/3} + 1}{8} +
	\frac34\left(x^{1/3}\right)^2 -  \left(\frac{2^{2/3} - 1}{8}\right) \left(x^{1/3}\right)^4.
	$$
	Since both $x^{2/3}$ and $ - x^{4/3}$ are concave on $[0,1]$, so
	is $B_4^{f_1} e_4(x^{1/3})$.
\end{example}

\end{document}